\newcommand{\tbt}[4]{\mbox{$\left[
\begin{array}{cc}
#1 & #2 \\
#3 & #4
\end{array}
\right]$}}
\newcommand{\thbth}[9]{\mbox{$\left[
\begin{array}{ccc}
#1 & #2 & #3 \\
#4 & #5 & #6 \\
#7 & #8 & #9
\end{array}
\right]$}}
\newproof{proof}{Proof}
\newtheorem{theorem}{Theorem}[section]
\newtheorem{lemma}[theorem]{Lemma}
\newtheorem{corollary}[theorem]{Corollary}
\newtheorem{eg}[theorem]{Example}
\newtheorem{remark}[theorem]{Remark}
\begin{document}

\begin{frontmatter}

\title{On the Phases of a Complex Matrix}
\author[label2]{Dan Wang}
\ead{dwangah@connect.ust.hk}
\author[label2]{Wei Chen\corref{cor1}}
\ead{wchenust@gmail.com}
\cortext[cor1]{Corresponding author}
\author[label3]{Sei Zhen Khong}
\ead{szkhong@hku.hk}
\author[label2]{Li Qiu}
\ead{eeqiu@ust.hk}

\address[label2]{Department of Electronic and Computer Engineering, Hong Kong University of Science and Technology, Clear Water Bay, Kowloon, Hong Kong, China}
\address[label3]{Department of Electrical and Electronic Engineering, The University of Hong Kong, Pokfulam, Hong Kong, China}

\begin{abstract}
In this paper, we define the phases of a complex sectorial matrix to be its canonical angles, which are uniquely determined from a sectorial decomposition of the matrix. Various properties of matrix phases are studied, including those of compressions, Schur complements, matrix products, and sums. In particular, by exploiting a notion known as the compound numerical range, we establish a majorization relation between the phases of the eigenvalues of $AB$ and the phases of $A$ and $B$. This is then applied to investigate the rank of $I+AB$ with phase information of $A$ and $B$, which plays a crucial role in feedback stability analysis. A pair of problems: banded sectorial matrix completion and decomposition is studied. The phases of the Kronecker and Hadamard products are also discussed.
\end{abstract}

\begin{keyword}
phases \sep arguments \sep sectorial matrices \sep interlacing \sep majorization \sep rank robustness

\vskip 3pt

\MSC[2010] 15A18  \sep 15A23  \sep 15A42  \sep 15A60  \sep 15B48  \sep 15B57 \sep 15A83
\end{keyword}

\end{frontmatter}

\section{Introduction}
\label{intro}
A nonzero complex scalar $c$ can be represented in the polar form as
\[
c=\sigma e^{i\phi},
\]
where $\sigma > 0$ is called the modulus or the magnitude and $\phi$ the argument or the phase. That is,
\[
\sigma = |c| \mbox{ and } \phi = \angle c.
\]
To be more specific, the phase $\phi$ takes values in a half open $2\pi$-interval, typically $[0,2\pi)$ or $(-\pi,\pi]$.
If $c=0$, the phase $\angle c$ is undefined.

The magnitude and phase are invariant under certain operations. Specifically, the magnitude of $c$ is ``unitarily invariant'' in the sense that
$|c|=|ucv|$ for all $u,v \in \mathbb{C}$ satisfying $|u|=|v|=1$. On the other hand, the phase is ``congruence invariant'' in the sense that $\angle c = \angle w^*cw$ for all nonzero $w \in \mathbb{C}$, where the superscript $*$ denotes the complex conjugate transpose.

The magnitude and phase have the following fundamental properties:
\begin{flalign}
&&|ab| & =|a| |b| & \llap{(multiplicativity)} \label{multiplicativity}\\
&&|a+b| & \leq |a|+|b| &\llap{(subadditivity)} \label{subadditivity}\\
&&\angle (ab) & = \angle a + \angle b \ \ \ \mbox{ mod $2\pi$} &\label{multiangle}\\
&&\angle (a+b)& \in \mathrm{Co} \{ \angle a, \angle b\}  \ \ \ \mbox{if $|\angle a - \angle b| < \pi$},& \label{boundangle}
\end{flalign}
where $\mathrm{Co}$ denotes the convex hull.
The observation that (\ref{multiplicativity}) and (\ref{multiangle}) are simple equalities whereas (\ref{subadditivity}) and (\ref{boundangle}) are not enhances our understanding that the multiplication operation is easier to perform using the polar form while the addition operation the rectangular form of complex numbers.

It is well accepted that an $n \times n$ complex matrix $C$ has $n$ magnitudes, served by the $n$ singular values
\[
\sigma (C) = \begin{bmatrix} \sigma_1 (C)  & \sigma_2 (C) & \dots & \sigma_n (C) \end{bmatrix}
\]
arranged in such a way that
\[
\overline{\sigma}(C)=\sigma_1 (C) \geq \sigma_2 (C) \geq \dots \geq \sigma_n(C) = \underline{\sigma}(C).
\]
The singular values can be obtained from a singular value decomposition $C = USV^*$, where $U,V$ are unitary, and $S = \mathrm{diag}\left\{\sigma_1(C), \sigma_2(C), \dots, \sigma_n(C)\right\}$ \cite[Theorem 2.6.3]{HornJohnson}. They can also be derived from the following maximin and minimax expressions \cite[Corollary III.1.2]{Bhatia}\cite[Theorem 4.2.6]{HornJohnson}:
\begin{align*}
\sigma_i (C) &= \max _{\mathcal{M}:\mathrm{ dim }\mathcal{M}=i} \min_{x\in \mathcal{M}, \|x\|=1} \|Cx\| \\ &= \min_{\mathcal{N}:\mathrm{ dim }\mathcal{N}=n-i+1} \max_{x\in\mathcal{N}, \|x\|=1} \|Cx\|.
\end{align*}
The singular values are unitarily invariant in the sense that
\[
\sigma (C) = \sigma(U^*CV)
\]
for all unitary matrices $U$ and $V$. In particular, permuting the rows or columns of $C$
does not change its singular values.

The singular values provide a bound on the magnitudes of the eigenvalues of $C$ in the majorization order. Given two vectors $x,y\in \mathbb{R}^n$, denote by $x^\downarrow$ and $y^\downarrow$ the rearranged
versions of $x$ and $y$, respectively, in which their elements are sorted in a non-increasing order. Then, $x$ is said to be \emph{majorized} by $y$, denoted by $x\prec y$, if
\begin{align*}
\sum_{i=1}^k x^\downarrow_i\leq\sum_{i=1}^k y^\downarrow_i,\ k=1,\dots, n-1,\quad \text{and} \quad
\sum_{i=1}^n x^\downarrow_i=\sum_{i=1}^n y^\downarrow_i.
\end{align*}
Moreover, $x$ is said to be \emph{weakly majorized} by $y$ from below, denoted by $x\prec_w y$, if the last equality sign is changed to $\leq$.
For two nonnegative vectors $x,y\in\mathbb{R}^n$, $x$ is said to be \emph{log-majorized} by $y$, denoted by $x \prec_{\log} y$, if
\begin{align*}
\prod_{i=1}^k x^\downarrow_i\leq\prod_{i=1}^k y^\downarrow_i,\ k=1,\dots, n-1, \quad \text{and} \quad
\prod_{i=1}^n x^\downarrow_i=\prod_{i=1}^n y^\downarrow_i.
\end{align*}
Note that log-majorization is stronger than weak-majorization, i.e., $x \prec_{\log} y$ implies $x \prec_w y$ \cite[Chapter 5, A.2.b]{Marshall}. For more details on the theory of majorization, we refer to \cite{Marshall} and the references therein.
Denote the vector of eigenvalues of $C$ by
\[
\lambda (C) = \begin{bmatrix} \lambda_1 (C)  & \lambda_2 (C) &  \dots & \lambda_n (C) \end{bmatrix}.
\]
It is well known that the magnitudes of the eigenvalues are bounded by the singular values in the following manner \cite[Theorem 9.E.1]{Marshall}:
\[
|\lambda (C) | \prec_{\log} \sigma (C),
\]
which implies, among other things, that
\[
|\lambda (C)| \prec_w \sigma (C) .
\]



The singular values of $AB$ and $A+B$ are related to those of $A$ and $B$ through
the following majorization type inequalities resembling
(\ref{multiplicativity}) and (\ref{subadditivity}), respectively, \cite[Theorems 9.H.1 and 9.G.1.d]{Marshall}
\begin{align}
\sigma(AB)  & \prec_{\log} \sigma (A) \odot \sigma(B), \label{gainmajorization}\\
\sigma (A+B) & \prec_w \sigma(A)+\sigma(B),
\end{align}
where $\odot$ denotes the Hadamard product, i.e., the elementwise product.

In contrast to the magnitudes of a complex matrix $C$, there does not exist a universally accepted definition of phases of $C$. What properties should the phases satisfy? In this paper, we advocate a definition of matrix phases and derive some of their properties as counterparts to those of the singular values.

An early attempt \cite{Macfarlane1981} defined the phases of $C$ as the phases of the eigenvalues of the unitary part of its polar decomposition, as motivated by the seeming generalization of the polar form of a scalar to the polar decomposition of a matrix. As in the scalar case, ambiguity arises when defining phases for a singular $C$. Hence, only nonsingular matrices are relevant. More precisely, let the left and right polar decompositions of a nonsingular matrix $C$ be given by $C=PU=UQ$, where $P$ and $Q$ are positive definite and $U$ is unitary \cite[Theorem 7.3.1]{HornJohnson}. The authors of \cite{Macfarlane1981} proposed to define the phases of $C$ as
\begin{align} \label{eq: psi}
\psi (C) = \angle \lambda (U).
\end{align}
The $\angle$ function may take values in a fixed interval of length $2\pi$, such as $(-\pi, \pi]$ or $[0, 2\pi)$. 
Phases defined through the polar decomposition have the advantage that they are applicable to any square nonsingular matrices. However, they do not possess certain desired properties, which will be discussed later.

In this paper, we shall revisit the matrix canonical angles introduced in \cite{FurtadoJohnson2001} and propose to adopt them as the phases of a complex matrix. The phases defined in this way have the desired properties as shown in earlier studies and this paper.

\section{Definition of phases}\label{phase def}
First, we review the concepts of numerical range, angular numerical range, and canonical angles of a matrix \cite[Chapter 1]{horntopics}. The numerical range, also called the field of values, of a matrix $C \in \mathbb{C}^{n\times n}$ is defined as
\[
W(C) = \{ x^*Cx: x \in \mathbb{C}^n, \|x\|=1\} ,
\]
which, as a subset of $\mathbb{C}$, is compact and contains the spectrum of $C$. In addition, according to the Toeplitz-Hausdorff theorem, $W(C)$ is convex.
The angular numerical range, also called the angular field of values, of $C$ is defined as
\[
W^\prime (C) = \{ x^*Cx: x \in \mathbb{C}^n, x\neq 0 \}.
\]

If $0\notin W(C)$, then $W(C)$ is completely contained in an open half plane by its convexity. In this case, $C$ is said to be a sectorial matrix \cite{ArlinskiilPopov2003}, also called sector matrix in \cite{Lin2016}. The sectorial matrices have been widely studied in the literature \cite{Drury2013,LiSze2014,ZhangFuzhen2015}.

Let $C$ be a sectorial matrix. It is well known \cite{Horn,DeprimaJohnson1974,Drury2013} that $C$ is congruent to a unitary matrix. In particular, the unitary matrix can be chosen to be diagonal, i.e., there exist a nonsingular matrix $T$ and a diagonal unitary matrix $D$ such that
\begin{align}
C=T^*DT.\label{sectorial decomposition}
\end{align}
The factorization (\ref{sectorial decomposition}) is called sectorial decomposition \cite{ZhangFuzhen2015}. It is clear that $W^{\prime}(D)=W^{\prime}(C)$ and thus $D$ is also sectorial. Let $\delta(C)$ be the field angle of $C$, i.e., the angle subtended by the two supporting rays of $W(C)$ at the origin.
We define the phases of $C$, denoted by $\phi_1(C),\phi_2(C),\dots,\phi_n(C)$, to be the phases of the eigenvalues (i.e., diagonal elements) of $D$, taking values in an interval $(\theta, \theta+\pi)$ for some $\theta\in[-\pi,\delta(C))$. The phases defined in this way coincide with the canonical angles of $C$ introduced in \cite{FurtadoJohnson2001}.

A sectorial decomposition for a sectorial matrix is not unique. Nevertheless, the diagonal unitary matrix $D$ is unique up to a permutation, as pointed out in \cite{ZhangFuzhen2015}. As such, the phases of a sectorial matrix are uniquely defined. We adopt the convention of labeling the phases in such a way that
\[\overline{\phi}(C)=\phi_1(C)\geq \phi_2(C)\geq \dots\geq \phi_n(C)=\underline{\phi}(C).
\]
Define $\phi (C) = [ \phi_1 (C)  \ \ \phi_2 (C) \ \ \dots \ \ \phi_n(C) ]$. The uniqueness of the phases also follows directly from the maximin and minimax expressions
\cite[Lemma 8]{Horn}:
\begin{equation}
\label{pminimax}
\begin{split}
\phi_i(C)&=\max_{\mathcal{M}: \mathrm{dim}\mathcal{M}=i}\min_{x\in \mathcal{M}, \|x\|=1} \angle x^*Cx\\&=\min_{\mathcal{N}: \mathrm{dim}\mathcal{N}=n-i+1}\max_{x\in \mathcal{N}, \|x\|=1}\angle x^*Cx.
\end{split}
\end{equation}
In particular,
\begin{align*}
\overline{\phi}(C)&=\max_{x\in\mathbb{C}^n,\|x\|=1}\angle x^*Cx,\\
\underline{\phi}(C)&=\min_{x\in\mathbb{C}^n,\|x\|=1}\angle x^*Cx,
\end{align*}
from which we can observe that $\overline{\phi}(C)$ and $\underline{\phi}(C)$ enjoy good geometric interpretations.
Consider the two supporting rays of $W(C)$. Since $0 \notin  W(C)$, both supporting rays can be determined uniquely. \mbox{Figure \ref{fig1}} illustrates an example of $W(C)$ and its supporting rays. The two angles from the positive real axis to the supporting rays are $\overline{\phi}(C)$ and $\underline{\phi}(C)$ respectively. The other phases of $C$ lie in between.
\begin{figure}[htb]
\centering
\includegraphics[scale=0.5]{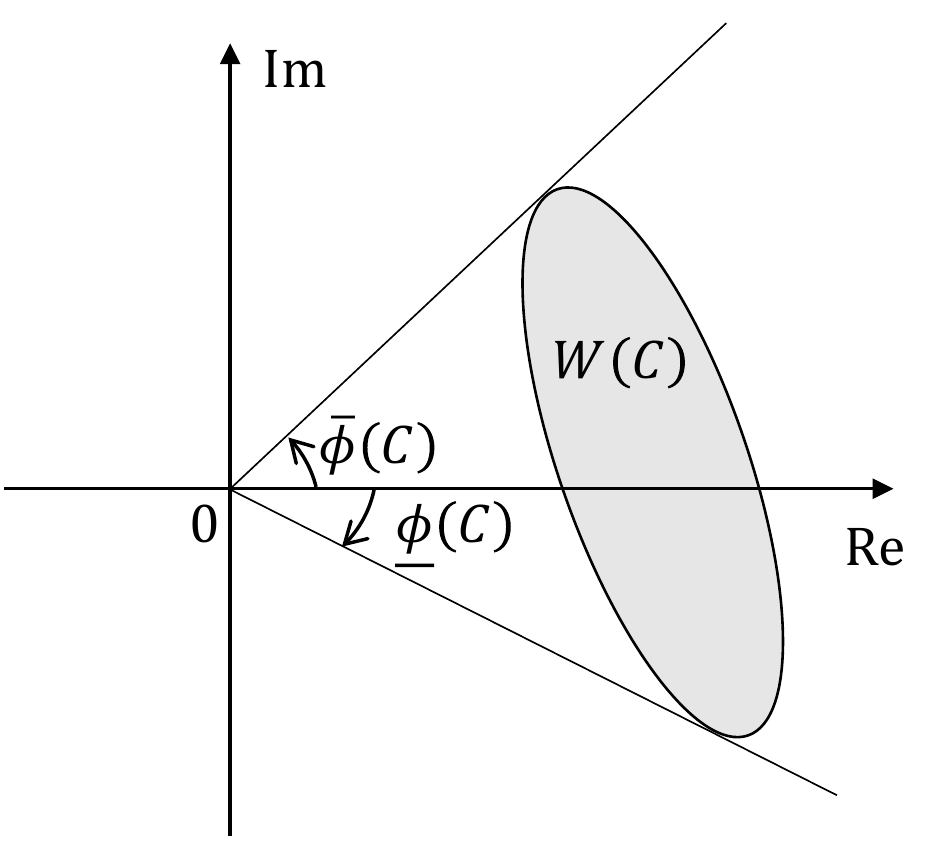}
\caption{Geometric interpretations of $\overline{\phi}(C)$ and $\underline{\phi}(C)$.}
\label{fig1}
\end{figure}

Given matrix $C$, we can check whether it is sectorial or not by plotting its numerical range. From the plot of numerical range, we can also determine a $\pi$-interval $(\theta,\theta+\pi)$ in which the phases take values. How to efficiently compute $\phi(C)$ is an important issue. The following observation provides some insights along this direction. Suppose $C$ is sectorial. Then it admits a sectorial decomposition $C=T^*DT$ and
thus
\begin{align*}
C^{-1}C^*=T^{-1}D^{-1}T^{-*}T^*D^*T=T^{-1}D^{-2}T,
\end{align*}
indicating that $C^{-1}C^*$ is similar to a diagonal unitary matrix. Hence, we can first compute $\angle\lambda(C^{-1}C^*)$, taking values in $(-2\theta\!-\!2\pi,-2\theta)$, and let $\phi(C)=-\frac{1}{2}\angle\lambda(C^{-1}C^*)$. This gives one possible way to compute $\phi(C)$. We are currently exploring other methods, hopefully of lower complexity, for determination of the interval in which matrix phases take values.



It is worth noting that the class of unitoids was introduced in \cite{JohnsonFurtado2001} to consist of matrices that are congruent to unitary matrices. Clearly, sectorial matrices constitute a special type of unitoid matrices. A nonsectorial unitoid matrix admits a factorization of the form (\ref{sectorial decomposition}). However, in this case, the eigenvalues of $D$ lie on an arc of the unit circle of length no less than $\pi$. In this paper, we do not define the phases of such matrices.

\begin{eg}
We have
\[
\phi \left( \thbth{1}{0}{0}{0}{e^{i\pi/4}}{0}{0}{0}{e^{-i\pi/4} } \right) = \begin{bmatrix}\pi/4& 0& -\pi/4\end{bmatrix}
\]
and
\[
\phi \left( \thbth{-1}{0}{0}{0}{e^{i3\pi/4}}{0}{0}{0}{e^{-i3\pi/4} } \right) = \begin{bmatrix}5\pi/4& \pi& 3\pi/4\end{bmatrix}.
\]
The matrix
\[
\thbth{1}{0}{0}{0}{e^{i2\pi/3}}{0}{0}{0}{e^{-i2\pi/3}}
\]
is not sectorial. We do not define the phases of this matrix, since in this endeavor there would clearly be an ambiguity in deciding whether the phases should be taken as $2\pi/3, 0, -2\pi/3$ or $4\pi/3, 2\pi/3, 0$.
\end{eg}

It is easy to see that the phases have the following simple properties:
\begin{enumerate}
\item The phases of a sectorial normal matrix are the phases of its eigenvalues.\label{property5}

\item The phases are invariant under congruence transformation, i.e., $\phi(C)=\phi(P^*CP)$ for every nonsingular $P$\label{property7}.

\item The phases of $C$ belong to $(- \pi/2, \pi/2)$ if and only if $C$ has a positive definite Hermitian part, i.e., $ (C+C^*)/2 > 0$. Such matrices are called strictly accretive matrices \cite{George2005} \cite[p. 279]{Kato}.\label{property8}

\end{enumerate}

If we were to use $\psi(C)$ in \eqref{eq: psi} as the definition of phases, then property \ref{property5} would be satisfied. However, properties \ref{property7}-\ref{property8} would not hold as illustrated in the following example.

\begin{eg}\label{eg: polar}
We have
\[
\psi \left(\tbt{\cos \theta}{-\sin \theta}{\sin \theta}{\cos \theta} \right) = \begin{bmatrix}\theta &-\theta\end{bmatrix}.
\]
But
\begin{multline}
\psi \left( \tbt{\cos \alpha}{0}{0}{\sin \alpha}\tbt{\cos \theta}{-\sin \theta}{\sin \theta}{\cos \theta} \tbt{\cos \alpha}{0}{0}{\sin \alpha}\right)\\= \begin{bmatrix}\arccos\displaystyle\frac{\cos \theta}{\sqrt{1-\sin^2 \theta \cos^2 2\alpha}}&\;-\arccos\displaystyle\frac{\cos \theta}{\sqrt{1-\sin^2 \theta \cos^2 2\alpha}}\end{bmatrix},\nonumber
\end{multline}
which differs from $\begin{bmatrix}\theta &-\theta\end{bmatrix}$.

We also have
\[
\psi \left(\tbt{\cos^2 \alpha}{0}{0}{\sin^2 \alpha} \tbt{\cos \theta}{-\sin \theta}{\sin \theta}{\cos \theta} \right) = \begin{bmatrix}\theta &-\theta\end{bmatrix},
\]
which is invariant of $\alpha \in (0, \pi/2)$. However, the determinant of the Hermitian part is given by
\[
\sin^2 2 \alpha \cos^2 \theta - \cos^2 2 \alpha \sin^2 \theta,
\]
which is negative for some $\theta$ close to $\pi/2$ and $\alpha$ away from $\pi/4$, such as $\theta=\pi/4$ and $\alpha= \pi/12$. That means the matrix is not strictly accretive for these values of $\theta$ and $\alpha$.
\end{eg}

The following lemma shows that the phases of a sectorial matrix provide a bound on the phases of its eigenvalues, just as the magnitudes of a matrix provide a bound on the magnitudes of its eigenvalues.
It has been proved implicitly in \cite[Lemma 9]{Horn} and also shown in \cite[Theorem 1]{FurtadoJohnson2001}.
\begin{lemma}
\label{main}
Let $C$ be sectorial with phases in $(\theta, \theta+\pi)$, where $\theta\in[-\pi, \delta(C))$. Then the phases of the eigenvalues of $C$ can be chosen so that
\[
\angle \lambda (C) \prec \phi (C).
\]
\end{lemma}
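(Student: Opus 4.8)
The plan is to start from a sectorial decomposition $C=T^{*}DT$ as in (\ref{sectorial decomposition}), with $D=\mathrm{diag}(e^{i\phi_1(C)},\dots,e^{i\phi_n(C)})$, and to establish $\angle\lambda(C)\prec\phi(C)$ by verifying its trace equality and its $n-1$ partial-sum inequalities separately. Throughout I take the eigenvalue phases in $(\theta,\theta+\pi)$, which is legitimate because every eigenvalue of $C$ lies in $W(C)$, hence in the closed sector between the two supporting rays, so $\angle\lambda_i(C)\in[\underline{\phi}(C),\overline{\phi}(C)]\subset(\theta,\theta+\pi)$.

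For the trace equality, note first that $\det C=\lvert\det T\rvert^{2}\det D$, so $\sum_i\angle\lambda_i(C)\equiv\sum_i\phi_i(C)\pmod{2\pi}$; the point is to remove this ambiguity. I would do so by a connectedness argument: the orbit $\{S^{*}DS:S\in GL_n(\mathbb{C})\}$ is the continuous image of the path-connected set of invertible matrices, every matrix in it is congruent to $D$ and hence (by property \ref{property7}) sectorial with the \emph{same} phase vector $\phi(C)$, so the numerical ranges of all these matrices lie in one fixed closed sector of opening $\delta(C)<\pi$. Consequently the arguments of the (nonzero) eigenvalues of $S^{*}DS$ are unambiguously pinned in $[\underline{\phi}(C),\overline{\phi}(C)]$, and $S\mapsto\sum_i\angle\lambda_i(S^{*}DS)$ is a continuous, $2\pi\mathbb{Z}$-valued perturbation of the constant $\sum_i\phi_i(C)$ (via the determinant identity); since it equals $\sum_i\phi_i(C)$ at $S=I$, it equals $\sum_i\phi_i(C)$ everywhere, in particular at $S=T$. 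This proves $\sum_i\angle\lambda_i(C)=\sum_i\phi_i(C)$ for every sectorial $C$.

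For a partial sum with $k<n$, Schur-triangularize $C=VUV^{*}$ with $V$ unitary and $U$ upper triangular, choosing the ordering so that the first $k$ diagonal entries of $U$ are the $k$ eigenvalues of $C$ of largest phase. Let $V_k$ be the matrix of the first $k$ columns of $V$, let $\mathcal{M}_k$ be its column space, and put $C_k:=V_k^{*}CV_k$, which is the leading $k\times k$ block of $U$; thus $C_k$ is upper triangular with spectrum exactly those top-$k$ eigenvalues of $C$, and $W(C_k)\subseteq W(C)$ forces $C_k$ to be sectorial. Applying the trace-equality result to $C_k$ gives
\[
\sum_{i=1}^{k}\bigl(\angle\lambda(C)\bigr)^{\downarrow}_{i}=\sum_{i=1}^{k}\angle\lambda_{i}(C_{k})=\sum_{i=1}^{k}\phi_{i}(C_{k}).
\]
It remains to show $\sum_{i=1}^{k}\phi_{i}(C_{k})\le\sum_{i=1}^{k}\phi_{i}(C)$, and this is immediate from the maximin formula in (\ref{pminimax}): for each $i$,
\[
\phi_{i}(C_{k})=\max_{\substack{\mathcal{S}\subseteq\mathcal{M}_{k}\\ \dim\mathcal{S}=i}}\ \min_{x\in\mathcal{S},\,\|x\|=1}\angle x^{*}Cx\ \le\ \max_{\substack{\mathcal{S}\subseteq\mathbb{C}^{n}\\ \dim\mathcal{S}=i}}\ \min_{x\in\mathcal{S},\,\|x\|=1}\angle x^{*}Cx=\phi_{i}(C),
\]
since enlarging the collection of admissible subspaces $\mathcal{S}$ can only increase the outer maximum. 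Summing over $i=1,\dots,k$ yields $\sum_{i=1}^{k}\bigl(\angle\lambda(C)\bigr)^{\downarrow}_{i}\le\sum_{i=1}^{k}\phi_{i}(C)$ for $k=1,\dots,n-1$, which together with the trace equality (the case $k=n$) is exactly $\angle\lambda(C)\prec\phi(C)$.

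The step I expect to require the most care is the branch-pinning in the trace equality: one must check that along the homotopy the eigenvalues move continuously as an unordered multiset --- so their argument sum stays continuous even when eigenvalues coincide or land on a supporting ray of the moving numerical range --- and that this, combined with the mod-$2\pi$ identity from the determinant, forces literal equality. The remaining ingredients --- that compressions of sectorial matrices are sectorial, that Schur triangularization admits any prescribed ordering of the eigenvalues, and the monotonicity of the maximin value under restriction of the competing subspaces --- are routine; and, as already noted, Lemma~\ref{main} is implicit in \cite[Lemma 9]{Horn} and \cite[Theorem 1]{FurtadoJohnson2001}, which can be consulted for an alternative route.
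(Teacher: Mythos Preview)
The paper does not supply a proof of this lemma; it merely attributes it to \cite[Lemma 9]{Horn} and \cite[Theorem 1]{FurtadoJohnson2001}. Your argument is correct and self-contained. The partial-sum step---compressing to a $k$-dimensional Schur-invariant subspace carrying the $k$ eigenvalues of largest argument, applying the trace identity to that compression, and then bounding $\sum_{i}\phi_i(C_k)\le\sum_{i}\phi_i(C)$ via the maximin formula (\ref{pminimax})---is precisely the mechanism that reappears later in the paper as Lemma~\ref{lem: extremal_rep} and is also the idea behind the cited references, so that portion is standard. What is distinctive in your write-up is the handling of the trace equality: the determinant identity $\det C=\lvert\det T\rvert^{2}\det D$ yields only $\sum_i\angle\lambda_i(C)\equiv\sum_i\phi_i(C)\pmod{2\pi}$, and since both sums lie in an interval of length $n\,\delta(C)$, which can exceed $2\pi$ once $n\ge 3$, some device is genuinely needed to pin the branch. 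Your homotopy through $GL_n(\mathbb{C})$ does this cleanly, and the continuity check you flagged is exactly the right one---every $S^{*}DS$ shares the angular numerical range of $D$, so its eigenvalues remain in the fixed closed sector $[\underline{\phi}(C),\overline{\phi}(C)]$ away from the origin and from any branch cut, and summing a symmetric continuous function of the unordered eigenvalue multiset is continuous in $S$.
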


Consider the matrix
\[
C= \left( \tbt{\cos \alpha}{0}{0}{\sin \alpha}\tbt{\cos \theta}{-\sin \theta}{\sin \theta}{\cos \theta} \tbt{\cos \alpha}{0}{0}{\sin \alpha}\right)
\]
in Example \ref{eg: polar}, where $\alpha, \theta\in (0, \frac{\pi}{2})$. One can see from the discussion therein that $\psi(C) \prec \phi(C)$. In fact, this is generally true for any sectorial matrix $C$ with phases in $(\theta, \theta+\pi)$, if $\psi(C)$ takes values in the same interval, which can be inferred from \cite[Lemma 9]{Horn}.

\section{Sectorial matrix decompositions}
As discussed in the previous section, a sectorial matrix $C$ admits a sectorial decomposition as in (\ref{sectorial decomposition}), which is not unique. If
the unitary matrix in (\ref{sectorial decomposition}) is not restricted to be diagonal, one gains more flexibility. In fact, letting $T= VP$ be the unique right polar decomposition and $U=V^*DV$ leads to a new decomposition
\[
C=PUP,
\]
where $P$ is positive definite and $U$ is unitary. This is called the symmetric polar decomposition (SPD). Meanwhile, letting $T= QR$ be the unique QR factorization \cite[Theorem 2.1.14]{HornJohnson} with $Q$ being unitary and $R$ being upper triangular with positive diagonal elements and letting $W=Q^*DQ$ leads to another decomposition
\[
C=R^*WR.
\]
This is called the generalized Cholesky factorization (GCF).

\begin{theorem} \label{thm: unique}
The SPD and GCF of a sectorial matrix $C$ are unique.
\end{theorem}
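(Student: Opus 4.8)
The plan is to prove uniqueness --- existence is already exhibited by the two constructions preceding the statement --- and to isolate one auxiliary fact that settles both decompositions at once. That fact is: \emph{if $U$ is a unitary sectorial matrix and $\Gamma$ is nonsingular with $\Gamma U\Gamma^{*}$ unitary, then $\Gamma$ is unitary.} To prove it, put $H=\Gamma^{*}\Gamma>0$. Unitarity of $\Gamma U\Gamma^{*}$ gives $(\Gamma U\Gamma^{*})(\Gamma U\Gamma^{*})^{*}=I$, i.e.\ $UHU^{*}=H^{-1}$, equivalently $UHU^{-1}=H^{-1}$. Conjugating once more, $U^{2}HU^{-2}=(UHU^{-1})^{-1}=H$, so $H$ commutes with $U^{2}$. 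Here sectoriality enters: the eigenvalues of $U$ lie on an arc of the unit circle of length less than $\pi$, on which the squaring map is injective; by Lagrange interpolation there is a polynomial $p$ with $p(U^{2})=U$, and hence $H$ commutes with $U$ as well. Combining $UHU^{-1}=H$ with $UHU^{-1}=H^{-1}$ forces $H=H^{-1}$, so $H=I$ since $H$ is positive definite; thus $\Gamma^{*}\Gamma=I$.

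For the SPD, suppose $C=P_{1}U_{1}P_{1}=P_{2}U_{2}P_{2}$ with $P_{i}>0$ and $U_{i}$ unitary. Rearranging, $U_{2}=(P_{2}^{-1}P_{1})U_{1}(P_{1}P_{2}^{-1})=\Gamma U_{1}\Gamma^{*}$ with $\Gamma=P_{2}^{-1}P_{1}$ nonsingular (note $\Gamma^{*}=P_{1}P_{2}^{-1}$). Now $U_{1}=(P_{1}^{-1})^{*}CP_{1}^{-1}$ is congruent to $C$, and sectoriality is congruence invariant (since $C$ is sectorial iff $e^{i\alpha}C+e^{-i\alpha}C^{*}>0$ for some real $\alpha$), so $U_{1}$ is sectorial; being unitary, its eigenvalues lie on an arc of length less than $\pi$. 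The auxiliary fact then gives $\Gamma^{*}\Gamma=I$, i.e.\ $P_{1}P_{2}^{-2}P_{1}=I$, i.e.\ $P_{1}^{2}=P_{2}^{2}$; by uniqueness of the positive definite square root $P_{1}=P_{2}$, and cancelling the nonsingular factors yields $U_{1}=U_{2}$.

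For the GCF, suppose $C=R_{1}^{*}W_{1}R_{1}=R_{2}^{*}W_{2}R_{2}$ with each $R_{i}$ upper triangular with positive diagonal and each $W_{i}$ unitary. Then $W_{2}=R^{*}W_{1}R$ with $R=R_{1}R_{2}^{-1}$, which is again upper triangular with positive diagonal. As before $W_{1}$ is unitary and sectorial, so the auxiliary fact gives $R^{*}R=I$: $R$ is unitary. But an upper triangular unitary matrix is diagonal, and a diagonal unitary matrix with positive diagonal entries is the identity, so $R=I$; hence $R_{1}=R_{2}$ and then $W_{1}=W_{2}$.

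The crux is the auxiliary fact, and within it the single non-formal step: using sectoriality --- through injectivity of squaring on a sub-$\pi$ arc of the unit circle --- to upgrade ``$H$ commutes with $U^{2}$'' into ``$H$ commutes with $U$''. This cannot be removed: for the unitary but nonsectorial $U=\mathrm{diag}(1,-1)$, the nonsingular matrix $\Gamma=\tbt{\cosh t}{\sinh t}{\sinh t}{\cosh t}$ satisfies $\Gamma U\Gamma^{*}=U$ without being unitary, which is precisely why the statement is confined to sectorial $C$.
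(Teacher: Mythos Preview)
Your argument is correct and takes a genuinely different route from the paper. The paper first diagonalizes the unitary factors, writing $U_i=W_i^{*}DW_i$, reduces to the equation $D=T^{*}DT$, and then carries out a block-by-block analysis over the distinct phase values to force $T$ to be block-diagonal unitary; uniqueness of the polar (respectively QR) decomposition then finishes. You instead isolate a single clean lemma --- $\Gamma U\Gamma^{*}$ unitary with $U$ unitary sectorial forces $\Gamma$ unitary --- and prove it spectrally: from $UHU^{-1}=H^{-1}$ you get $[H,U^{2}]=0$, and the sectoriality is used precisely once, through injectivity of $z\mapsto z^{2}$ on a sub-$\pi$ arc, to interpolate $U$ as a polynomial in $U^{2}$ and conclude $[H,U]=0$, whence $H=I$. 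What this buys is a uniform treatment of both SPD and GCF without any diagonalization or block bookkeeping, and a transparent explanation of why sectoriality is necessary, which you make explicit with the $U=\mathrm{diag}(1,-1)$ counterexample; the paper's proof, by contrast, keeps sectoriality somewhat implicit inside the block computations (``all phases lie in an interval $(\theta,\theta+\pi)$'').
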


\begin{proof}
We first show the uniqueness of the SPD. Suppose $C$ admits two SPDs:
\[
C = P_1U_1P_1 = P_2U_2P_2.
\]
Since the phases of a sectorial matrix are unique, we can find unitary matrices $W_1$ and $W_2$ such that $U_1=W^*_1DW_1$ and $U_2=W^*_2DW_2$, where
\[
D=\mathrm{diag}\left\{e^{i\phi_1(C)},e^{i\phi_2(C)},\dots,e^{i\phi_n(C)}\right\}.
\]
It follows that
$P_1W^*_1DW_1P_1=P_2W^*_2DW_2P_2$,
yielding
\begin{align}
D=T^*DT,\label{spd}
\end{align}
where $T=W_2P_2P^{-1}_1W^*_1$.
Suppose within the phases of $C$ there are in total $m$ distinct values $\phi_{[1]},\phi_{[2]},\dots,\phi_{[m]}$, arranged in a decreasing order with respective multiplicities given by $n_1,n_2,\dots,n_m$. Then, we can rewrite $D$ as
\[
D=\mathrm{diag}\{e^{i\phi_{[1]}}I_{n_1},e^{i\phi_{[2]}}I_{n_2},\dots,e^{i\phi_{[m]}}I_{n_m}\}
\]
and partition $T$ with compatible dimensions into
\begin{align*}
T=\begin{bmatrix}T_{11}&T_{12}&\cdots&T_{1m}\\
T_{21}&T_{22}&\cdots&T_{2m}\\
\vdots&\vdots&\ddots&\vdots\\
T_{m1}&T_{m2}&\cdots&T_{mm}\end{bmatrix},
\end{align*}
where $T_{ij}\in\mathbb{C}^{n_i\times n_j}$.

Simple computations from (\ref{spd}) yield
\begin{align}
\begin{split}
e^{i\phi_{[k]}}I_{n_k} & =\sum_{l=1}^{m}e^{i\phi_{[l]}}T^*_{lk}T_{lk}, \quad k=1,2,\dots,m\label{spd2} \quad \text{and} \\
0 & = \sum_{l=1}^m e^{i\phi_{[l]}}T_{lj}^*T_{lk}, \quad j, k = 1, 2, \dots, m, \quad j \neq k.
\end{split}
\end{align}
From the first equation in (\ref{spd2}) and the fact that all phases lie in an interval $(\theta,\theta+\pi)$, it follows that
\begin{align*}
T^*_{11}T_{11}=I_{n_1} \text{ and }T_{l1}=0,l\neq 1.
\end{align*}
Consequently, by the second equation in \eqref{spd2}, we have
\begin{align*}
T_{1k} = 0, k\neq 1.
\end{align*}
Repeated applications of the arguments above then yield
\begin{align*}
T^*_{kk}T_{kk}=I_{n_k},k=1,2,\dots,m, \text{ and }T_{lk}=0,l\neq k,
\end{align*}
which means $T$ is a block diagonal unitary matrix. Therefore, $W^*_2TW_1$ is unitary and $P_2=W^*_2TW_1P_1$ gives a polar decomposition of $P_2$. By the uniqueness of polar decomposition, we have $P_2=P_1$ and thus $U_1=U_2$.

The uniqueness of the GCF can be shown using the same lines of arguments and the uniqueness of the QR decomposition in lieu of that of the polar decomposition.
\end{proof}

For a sectorial matrix $C$ with the polar decomposition $C=\tilde{P}\tilde{U}$ and the SPD $C=PUP$, where $\tilde{P},P$ are positive definite, and $\tilde{U},U$ are unitary, we have introduced the majorization relation between $\angle\lambda(C)$, $\psi(C)$ and $\phi(C)$ in the previous section. There is an analogous relation between $|\lambda(C)|$, $\sigma(\tilde{P})$ and $\sigma(P)$. From \cite[Theorem 9.E.1]{Marshall}, we know
\[
|\lambda(C)|\prec_{\mathrm{log}}\sigma(\tilde{P}).
\]
Since $\tilde{P}=PUP\tilde{U}^*$, in view of inequality (\ref{gainmajorization}), we have
\[
\sigma(\tilde{P})\prec_{\mathrm{log}}\sigma^2(P),
\]
where $\sigma^2(P)$ is the elementwise square of $\sigma(P)$.

For a nonsectorial unitoid matrix, one can also write its SPD and GCF as above. However, neither SPD nor GCF is unique in this case.
\begin{eg}
Consider the unitoid matrix $\begin{bmatrix}0&1\\1&0\end{bmatrix}$. For any $a>0$,
\begin{align*}
\begin{bmatrix}a&0\\0&\frac{1}{a}\end{bmatrix}\begin{bmatrix}0&1\\1&0\end{bmatrix}\begin{bmatrix}a&0\\0&\frac{1}{a}\end{bmatrix} = \begin{bmatrix}0&1\\1&0\end{bmatrix}
\end{align*}
is both an SPD and a GCF.

\end{eg}

In the case where $C$ is a real sectorial matrix, sectorial decomposition, SPD, and GCF have their respective real counterparts. To see this, note that the numerical range of a real sectorial matrix is symmetric about the real axis. Let
\[
H = \frac{1}{2}(C + C^*) \quad \text{and} \quad S = \frac{1}{2}(C - C^*).
\]
Without loss of generality, suppose that $H$ is positive definite (otherwise we could work with $-C$). Let $H^{-\frac{1}{2}}$ be the inverse of the unique positive definite square root of $H$. Then
\[
H^{-\frac{1}{2}} C H^{-\frac{1}{2}} = H^{-\frac{1}{2}}(H + S)H^{-\frac{1}{2}} = I + H^{-\frac{1}{2}}SH^{-\frac{1}{2}},
\]
which is a real normal matrix. In other words, $C$ is congruent to a real normal matrix via a real congruence. Since a real normal matrix $A$ can be decomposed \cite[Theorem 2.5.8]{HornJohnson} into
\[
A = Q^* \mathrm{diag}\{A_1, \dots, A_k\} Q, \quad 1 \leq k \leq n,
\]
where $Q$ is a real orthogonal matrix and $A_i$ is either a real scalar or a real 2-by-2 matrix of the form
\[
A_i = \begin{bmatrix} \alpha_i & \beta_i \\ -\beta_i & \alpha_i \end{bmatrix},
\]
it follows that a real sectorial matrix $C$ can be factorized as
\[
C = T^* D T,
\]
where $T$ is a nonsingular real matrix and $D$ is a real block-diagonal orthogonal matrix with each block being either a scalar or a 2-by-2 matrix. We call this a real sectorial decomposition, which is nonunique in general. By performing the real polar decomposition and real QR decomposition of $T$, we arrive, respectively, at the real SPD and GCF, whose uniqueness follows from Theorem~\ref{thm: unique}. To be specific, there exist unique real positive definite $P$, real upper triangular $R$ with positive diagonal elements, and real orthogonal $U$ and $W$ such that
\[
C = PUP = R^*WR.
\]

\section{Phases of compressions and Schur complements}\label{sec:compression}

There is an interlacing relation between the magnitudes of a matrix $C$ and those of its $(n-k) \times (n-k)$ submatrices \cite[Corollary III.1.5]{Bhatia}:
\[
\sigma_i (C) \geq \sigma_i (U^*CV) \geq \sigma_{i+k} (C), \quad i=1,\dots,n-k,
\]
for all $n\times (n-k)$ isometries $U$ and $V$, which are matrices with orthonormal columns. In particular, when $k=1$, we have
\[
\sigma_1 (C) \geq \sigma_1 (U^*CV) \geq \sigma_2 (C) \geq \dots \geq \sigma_{n-1} (U^*CV) \geq \sigma_n (C).
\]

Let $C\in\mathbb{C}^{n\times n}$ be partitioned as
$C=\begin{bmatrix}C_{11}&C_{12}\\C_{21}&C_{22}\end{bmatrix}$, where $C_{11}\in\mathbb{C}^{k\times k}$. If $C_{11}$ is nonsingular, then the Schur complement \cite{ZhangFuzhen2006Schur} of $C_{11}$ in $C$, denoted by $C\slash_{11}$, exists and is given by $C\slash_{11}=C_{22}-C_{21}C_{11}^{-1}C_{12}$.
It can be inferred from \cite[Corollary 2.3]{ZhangFuzhen2006Schur} that $\sigma(C\slash_{11})$ and $\sigma(C)$ satisfy the following interlacing relation:
\[
\sigma_i(C) \geq \sigma_i(C\slash_{11}) \geq \sigma_{i+k}(C), \quad i=1,\dots,n-k.
\]

In this section, we introduce the interlacing properties between the phases of a sectorial matrix and those of its compressions and Schur complements.

Let $U$ be an $n \times (n-k)$ isometry, then $\tilde{C} = U^* C U$ is said to be a compression of
$C$. The phases of $\tilde{C}$ and those of $C$ have the following interlacing property~\cite{FurtadoJohnson2003}.

\begin{lemma}\label{thm: compression}
Let $C\in\mathbb{C}^{n\times n}$ be sectorial with phases in $(\theta, \theta+\pi)$, $\theta\in[-\pi,\delta(C))$, and $\tilde{C}\!\in\!\mathbb{C}^{(n-k)\times (n-k)}$ be a compression of $C$. Then $\tilde{C}$ is also sectorial and
\begin{equation} \label{eq: compression}
\phi_j(C) \geq \phi_j(\tilde{C}) \geq \phi_{j+k}(C), \text{ for }j=1,\dots,n-k.
\end{equation}
\end{lemma}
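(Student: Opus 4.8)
The plan is to reduce the statement to the variational characterization~\eqref{pminimax} of the phases and mimic the classical Cauchy interlacing argument for eigenvalues of Hermitian matrices. First I would verify that $\tilde C$ is sectorial: since $\tilde C = U^*CU$ with $U$ an isometry, every unit vector $y \in \mathbb{C}^{n-k}$ gives $y^*\tilde C y = (Uy)^*C(Uy)$ with $\|Uy\| = \|y\| = 1$, so $W(\tilde C) \subseteq W(C)$. As $0 \notin W(C)$, also $0 \notin W(\tilde C)$, hence $\tilde C$ is sectorial; moreover $\angle x^*\tilde C x$ is well defined and lies in $(\theta,\theta+\pi)$ for the same $\theta$, so the two phase vectors are measured on a common branch and the inequalities in~\eqref{eq: compression} are meaningful.

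For the upper bound $\phi_j(\tilde C) \le \phi_j(C)$: pick, via the maximin expression in~\eqref{pminimax}, a $j$-dimensional subspace $\mathcal{M} \subseteq \mathbb{C}^{n-k}$ achieving $\phi_j(\tilde C) = \min_{y \in \mathcal{M},\,\|y\|=1}\angle y^*\tilde C y$. Then $U\mathcal{M}$ is a $j$-dimensional subspace of $\mathbb{C}^n$ (as $U$ is injective), and for $x = Uy$ with $y \in \mathcal{M}$, $\|y\|=1$, we have $\angle x^*Cx = \angle y^*\tilde C y \ge \phi_j(\tilde C)$. Hence the $j$-dimensional subspace $U\mathcal{M}$ witnesses $\phi_j(C) = \max_{\dim \mathcal{M}' = j}\min \ge \phi_j(\tilde C)$. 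For the lower bound $\phi_j(\tilde C) \ge \phi_{j+k}(C)$ I would run the dual argument with the minimax expression: choose an $(n-j+1)$-dimensional subspace $\mathcal{N} \subseteq \mathbb{C}^{n-k}$ with $\phi_j(\tilde C) = \max_{y \in \mathcal{N},\,\|y\|=1}\angle y^*\tilde C y$. Then $U\mathcal{N}$ has dimension $n-j+1$ in $\mathbb{C}^n$, and for $x = Uy$, $\angle x^*Cx = \angle y^*\tilde C y \le \phi_j(\tilde C)$ for all unit $y \in \mathcal{N}$; thus $U\mathcal{N}$ is a candidate subspace in the minimax for $\phi_j(C)$, giving $\phi_j(C) \le \phi_j(\tilde C)$ directly. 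To get the sharper $\phi_{j+k}(C)$ I would instead intersect: replace $\mathcal{N}$ with a subspace of $\mathbb{C}^n$ of dimension $n-(j+k)+1$ obtained as $U\mathcal{N} \cap (\text{something})$, or more cleanly, apply the already-proven upper bound to the phase vector read in reverse order, i.e.\ use $\phi_{n-j+1}(-C)$-type symmetry, or simply note $\phi_{j+k}(C) = \max_{\dim \mathcal{M} = j+k}\min_{x \in \mathcal{M}}\angle x^*Cx$ and observe that any $(j+k)$-dimensional $\mathcal{M} \subseteq \mathbb{C}^n$ meets the $(n-j+1)$-dimensional subspace $U\mathcal{N} + (\operatorname{range} U)^{\perp}$ — whose dimension is $(n-j+1) + k$ minus the dimension of the intersection with $\operatorname{range} U$, which is at least $n-j+1$ — in a nonzero vector, and a dimension count $(j+k) + (n-j+1) > n$ forces a common unit vector $x$. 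One subtlety: that common vector lies in $U\mathcal{N} + (\operatorname{range} U)^\perp$, not in $\operatorname{range} U$, so $\angle x^*Cx$ need not equal an angle of $\tilde C$; the clean fix is to decompose $x = Uy + z$ with $z \perp \operatorname{range} U$ and handle the cross terms, which is exactly where care is needed.

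The main obstacle, then, is the lower interlacing bound with the offset $k$: the naive subspace-intersection trick gives only $\phi_j(\tilde C) \ge \phi_j(C)$, which is the wrong direction, so one must be more careful about which subspace of $\mathbb{C}^n$ to build. The cleanest route I would actually take is to avoid the cross-term analysis entirely: complete $U$ to a unitary $[\,U \ \ U_\perp\,] \in \mathbb{C}^{n\times n}$ and write $C$ in this basis as a $2\times 2$ block matrix whose $(1,1)$ block is $\tilde C$; then $\tilde C$ is the compression of $C$ to a coordinate subspace, and the claim is the phase analogue of the Cauchy interlacing theorem for a principal submatrix obtained by deleting $k$ rows and columns. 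In that form one can induct on $k$, reducing to the case $k=1$ (delete one coordinate), where a one-dimensional-intersection argument against $\mathcal{M} \cap e_n^\perp$ — with $e_n$ the deleted coordinate — does yield both $\phi_j(C) \ge \phi_j(\tilde C) \ge \phi_{j+1}(C)$ directly from~\eqref{pminimax}, since now the intersected vectors genuinely lie in the coordinate subspace and $\angle x^*Cx = \angle x^*\tilde C x$ exactly. I would cite~\cite{FurtadoJohnson2003} for the statement and present the $k=1$ coordinate-subspace version in detail, then iterate.
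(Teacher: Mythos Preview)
The paper does not supply its own proof of this lemma; it simply attributes the result to~\cite{FurtadoJohnson2003} and states it. So there is no in-paper argument to compare against.

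Your final strategy is correct and is the standard one: the sectoriality of $\tilde C$ via $W(\tilde C)\subseteq W(C)$ is immediate, the upper bound $\phi_j(\tilde C)\le\phi_j(C)$ follows cleanly from the maximin expression~\eqref{pminimax} by pushing a $j$-dimensional optimizer forward through $U$, and the lower bound is obtained by completing $U$ to a unitary, reducing to a principal submatrix, and inducting on $k$. For $k=1$ the intersection argument you sketch works: if $\mathcal{M}\subseteq\mathbb{C}^n$ is a $(j+1)$-dimensional subspace achieving $\phi_{j+1}(C)$ in the maximin, then $\mathcal{M}\cap\operatorname{range}U$ has dimension at least $j$, and on that intersection the angles for $C$ and $\tilde C$ coincide, giving a $j$-dimensional witness for $\phi_j(\tilde C)\ge\phi_{j+1}(C)$. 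This is exactly the Cauchy interlacing mechanism transported to phases via~\eqref{pminimax}, and it is essentially how the cited reference proceeds.

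One editorial remark: the middle portion of your write-up, where you try $U\mathcal{N}+(\operatorname{range}U)^\perp$ and then diagnose the cross-term obstruction, is a dead end you yourself abandon. In a final version you should excise that detour and go straight to the unitary completion plus induction on $k$; the argument is then short and linear.
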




Note that in the special case where $k=1$, i.e., $\tilde{C}\in \mathbb{C}^{(n-1)\times (n-1)}$, the inequality (\ref{eq: compression}) becomes
\[
\phi_1 (C) \geq \phi_1 (\tilde{C}) \geq \phi_2 (C) \geq \dots \geq \phi_{n-1} (\tilde{C}) \geq \phi_n (C).
\]

\begin{remark} \label{rem: compression}
Since any full-rank $X \!\in\! \mathbb{C}^{n \times\! (n-k)}$ can be QR-decomposed as $X \!=\! QR$, where $Q \in \mathbb{C}^{n \times (n-k)}$ is an isometry and $R \in \mathbb{C}^{(n-k) \times (n-k)}$ is a nonsingular matrix, it follows from Lemma~\ref{thm: compression} that for any sectorial $C \in \mathbb{C}^{n \times n}$ and $j=1,\dots,n-k$,
\[
\phi_j(C) \geq \phi_j(X^*CX) = \phi_j(Q^*CQ) \geq \phi_{j+k}(C).
\]
\end{remark}

By exploiting the interlacing property of the phases of compressions of a sectorial matrix, we can deduce an interlacing property of the phases of the Schur complements of a sectorial matrix.
Let
$C=\begin{bmatrix}C_{11}&C_{12}\\C_{21}&C_{22}\end{bmatrix}\in\mathbb{C}^{n\times n}$ be sectorial, where $C_{11}\in\mathbb{C}^{k\times k}$. In light of Lemma~\ref{thm: compression}, $C_{11}$ is sectorial and hence nonsingular.

\begin{theorem}
Let $C\in\mathbb{C}^{n\times n}$ be sectorial with phases in $(\theta,\theta+\pi)$, $\theta\in[-\pi,\delta(C))$. Then $C\slash_{11}$ is also sectorial and
\begin{align*}\phi_j(C)\geq \phi_j(C\slash_{11})\geq \phi_{j+k}(C),
\text{ for }j=1,\dots,n-k.
\end{align*}
\end{theorem}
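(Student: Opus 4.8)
The plan is to reduce the Schur-complement interlacing to the compression interlacing of Lemma~\ref{thm: compression} via the standard identity relating a Schur complement to a certain compression of the \emph{inverse}. Specifically, write $C\slash_{11}=C_{22}-C_{21}C_{11}^{-1}C_{12}$, and recall the well-known block-inverse formula: if $C^{-1}=\begin{bmatrix}(C^{-1})_{11}&(C^{-1})_{12}\\(C^{-1})_{21}&(C^{-1})_{22}\end{bmatrix}$ is partitioned conformally with $C$, then $(C^{-1})_{22}=(C\slash_{11})^{-1}$. Equivalently, $(C\slash_{11})^{-1}=E^*C^{-1}E$ where $E=\begin{bmatrix}0\\I_{n-k}\end{bmatrix}\in\mathbb{C}^{n\times(n-k)}$ is the isometry picking out the trailing coordinates. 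So $(C\slash_{11})^{-1}$ is a compression of $C^{-1}$.

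The second ingredient is to relate the phases of $C^{-1}$ to those of $C$, and the phases of an inverse to those of the original. First, from the sectorial decomposition $C=T^*DT$ we get $C^{-1}=T^{-1}D^{-1}T^{-*}=(T^{-*})^*D^{-1}(T^{-*})$, so $C^{-1}$ is sectorial with $\phi_j(C^{-1})=-\phi_{n-j+1}(C)$ (the phases of $D^{-1}$ are the negatives of those of $D$, then relabelled in decreasing order), and the phase interval correspondingly reflects to $(-\theta-\pi,-\theta)$. Likewise, for any sectorial $M$ with sectorial decomposition $M=S^*ES$ one has $M^{-1}=(S^{-*})^*E^{-1}(S^{-*})$, giving $\phi_j(M^{-1})=-\phi_{n-j+1}(M)$; in particular this holds with $M=C\slash_{11}$, whose sectoriality and nonsingularity I get from applying Lemma~\ref{thm: compression} (and the remark on full-rank $X$) — actually I first need sectoriality of $C\slash_{11}$, which follows because $(C\slash_{11})^{-1}=E^*C^{-1}E$ is a compression of the sectorial $C^{-1}$, hence sectorial by Lemma~\ref{thm: compression}, hence $C\slash_{11}$ is sectorial too.

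Now assemble: applying the compression interlacing (Lemma~\ref{thm: compression}) to $C^{-1}$ with the isometry $E$ and the resulting compression $(C\slash_{11})^{-1}$ of size $(n-k)\times(n-k)$ gives
\begin{align*}
\phi_j(C^{-1})\geq \phi_j\big((C\slash_{11})^{-1}\big)\geq \phi_{j+k}(C^{-1}),\quad j=1,\dots,n-k.
\end{align*}
Substituting $\phi_j(C^{-1})=-\phi_{n-j+1}(C)$ and $\phi_j((C\slash_{11})^{-1})=-\phi_{n-k-j+1}(C\slash_{11})$ and then re-indexing by $i=n-k-j+1$ (so $j=n-k-i+1$, and $j$ ranging over $1,\dots,n-k$ corresponds to $i$ ranging over $1,\dots,n-k$) converts the two inequalities into
\begin{align*}
-\phi_{k+i}(C)\geq -\phi_i(C\slash_{11})\geq -\phi_i(C),
\end{align*}
which, after multiplying by $-1$ and reversing the inequalities, is exactly $\phi_i(C)\geq \phi_i(C\slash_{11})\geq \phi_{i+k}(C)$, the claimed relation.

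The one genuinely delicate point — and the step I would flag as the main obstacle — is bookkeeping the phase \emph{interval}: phases are only defined modulo the choice of a $\pi$-interval $(\theta,\theta+\pi)$, and one must verify that the interval $(-\theta-\pi,-\theta)$ used for $C^{-1}$ is consistent for the compression $(C\slash_{11})^{-1}$ as well, so that the identities $\phi_j(\cdot^{-1})=-\phi_{n-j+1}(\cdot)$ and the compression inequalities are all being read in \emph{compatible} intervals (cf.\ the requirement $\theta\in[-\pi,\delta(C))$ and the analogous constraint for $C^{-1}$). Since $W(C\slash_{11})\subseteq$ a half-plane determined by $W(C^{-1})$'s supporting rays, the field angle of $C\slash_{11}$ is no larger, so a common $\pi$-interval can indeed be chosen; making this precise is the bulk of the work, the rest being the routine index juggling above. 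Alternatively, one could bypass the inverse entirely by mimicking the proof of Lemma~\ref{thm: compression} directly with the minimax characterization~\eqref{pminimax} applied to $C\slash_{11}$, using that $x^*(C\slash_{11})x=\inf_y \begin{bmatrix}y\\x\end{bmatrix}^*C\begin{bmatrix}y\\x\end{bmatrix}$-type variational identities for sectorial matrices; but the inverse/compression route is cleaner and reuses Lemma~\ref{thm: compression} verbatim.
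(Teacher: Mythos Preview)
Your proposal is correct and follows essentially the same route as the paper: both reduce to Lemma~\ref{thm: compression} via the identity $(C\slash_{11})^{-1}=(C^{-1})_{22}$, show $C^{-1}$ is sectorial with $\phi_j(C^{-1})=-\phi_{n-j+1}(C)$ (the paper uses the SPD, you use the sectorial decomposition directly, which is equivalent), apply the compression interlacing to $C^{-1}$, and then re-index. Your explicit attention to the interval bookkeeping is a welcome addition that the paper leaves implicit.
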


\begin{proof}
Let $C=PUP$ be the SPD of $C$. Then
\begin{align*}
C^{-1}=P^{-1}U^{-1}P^{-1}.
\end{align*}
Hence, $C^{-1}$ is sectorial. Moreover, if $\phi(C)$ takes values in the interval $(\theta,\theta+\pi)$, then $\phi(C^{-1})=[-\phi_n(C)\ \dots\ -\phi_1(C)]$, taking values in the interval $(-\theta\!-\!\pi,-\theta)$.

We partition $C^{-1}$ into $C^{-1}=\begin{bmatrix}(C^{-1})_{11}&(C^{-1})_{12}\\(C^{-1})_{21}&(C^{-1})_{22}\end{bmatrix}$. Then $C\slash_{11}^{-1}=(C^{-1})_{22}$. By Lemma~\ref{thm: compression}, $(C^{-1})_{22}$ is sectorial and thus, $C\slash_{11}^{-1}$ is sectorial and so is $C\slash_{11}$.
In view of (\ref{eq: compression}), we have
\begin{align*}
\phi_j(C^{-1})\geq \phi_j\left((C^{-1})_{22}\right)=\phi_j(C\slash_{11}^{-1})\geq \phi_{j+k}(C^{-1}),\; j=1,\dots,n-k.
\end{align*}
Since
\begin{align*}
\phi_j(C^{-1})&=-\phi_{n-j+1}(C),\\
\phi_j(C\slash_{11}^{-1})&=-\phi_{n-k-j+1}(C\slash_{11}),
\end{align*}
it follows that
\begin{align*}
\phi_j(C)\geq \phi_j(C\slash_{11})\geq \phi_{j+k}(C), \;j=1,\dots,n-k.
\end{align*}
\end{proof}

Lemma~\ref{thm: compression} can also be used to construct a simple proof of the following result, as a special case of \cite[Theorem 3.11]{Donnell}, which will be used in the subsequent sections.

\begin{lemma} \label{lem: extremal_rep}
Let $C\! \in \!\mathbb{C}^{n \times n}$ be sectorial with phases in $(\theta,\theta+\pi)$, where $\theta \in [-\pi,\delta(C))$. Then
\begin{align}
\max_{X \in \mathbb{C}^{n \times k} \text{ is full rank}} \sum_{i=1}^k \phi_i(X^* C X) &= \sum_{i=1}^k \phi_i(C),\label{lemma4.4-1}\\
\min_{X \in \mathbb{C}^{n \times k} \text{ is full rank}} \sum_{i=1}^k \phi_i(X^* C X) &= \sum_{i=n-k+1}^n \phi_i(C).\label{lemma4.4-2}
\end{align}
\end{lemma}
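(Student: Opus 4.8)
The plan is to deduce Lemma~\ref{lem: extremal_rep} from the interlacing inequalities of Lemma~\ref{thm: compression} together with Remark~\ref{rem: compression}. First I would observe that, by Remark~\ref{rem: compression}, for any full-rank $X \in \mathbb{C}^{n \times k}$ the compression $X^*CX$ is sectorial with phases $\phi_j(X^*CX) \le \phi_j(C)$ for $j = 1,\dots,k$. Summing these $k$ inequalities gives $\sum_{i=1}^k \phi_i(X^*CX) \le \sum_{i=1}^k \phi_i(C)$, so the supremum in \eqref{lemma4.4-1} is at most the right-hand side. For the matching lower bound (so that the supremum is attained), I would exhibit a specific $X$: namely take $X$ to have columns equal to the first $k$ columns of the congruence matrix realizing a sectorial decomposition, or more directly, use the minimax formula \eqref{pminimax}. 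Concretely, if $C = T^*DT$ with $D = \mathrm{diag}\{e^{i\phi_1(C)},\dots,e^{i\phi_n(C)}\}$, choosing $X = T^{-1}[I_k\ \ 0]^*$ (times any invertible $k\times k$ matrix, which does not change the phases) yields $X^*CX$ congruent to $\mathrm{diag}\{e^{i\phi_1(C)},\dots,e^{i\phi_k(C)}\}$, hence $\phi_i(X^*CX) = \phi_i(C)$ for $i = 1,\dots,k$. This shows the bound is tight, establishing \eqref{lemma4.4-1}.

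For \eqref{lemma4.4-2} I would argue symmetrically. From Remark~\ref{rem: compression}, $\phi_j(X^*CX) \ge \phi_{j+(n-k)}(C)$ for $j = 1,\dots,k$; summing gives $\sum_{i=1}^k \phi_i(X^*CX) \ge \sum_{i=n-k+1}^n \phi_i(C)$, so the infimum is at least the right-hand side. Tightness is again witnessed by choosing $X = T^{-1}[0\ \ I_k]^*$, which makes $X^*CX$ congruent to $\mathrm{diag}\{e^{i\phi_{n-k+1}(C)},\dots,e^{i\phi_n(C)}\}$. Alternatively, one can simply apply \eqref{lemma4.4-1} to $-C$ or to a suitably rotated version of $C$, using that the phases of the compression shift by the same rotation, but writing out the explicit $X$ is cleaner and avoids worrying about the interval $(\theta,\theta+\pi)$ in which phases are measured.

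The main point requiring a little care is the choice of the extremal $X$ and the verification that $X^*CX$ is genuinely sectorial with the claimed phases. Once we write $C = T^*DT$, the compression along the first (or last) $k$ coordinates of $T$ is congruent, via the $k\times k$ block of the identity, to the corresponding diagonal block of $D$, which is a diagonal unitary matrix whose eigenvalues $e^{i\phi_i(C)}$ all lie on the arc $\{e^{it} : t \in (\theta,\theta+\pi)\}$; hence this compression is sectorial and its phases are exactly $\phi_1(C),\dots,\phi_k(C)$ (resp.\ $\phi_{n-k+1}(C),\dots,\phi_n(C)$), measured in the same $\pi$-interval. Since congruence does not alter phases (property~\ref{property7}), and since full-rank right-multiplication of $X$ by an invertible $k \times k$ matrix is a congruence on $X^*CX$, the value $\sum \phi_i(X^*CX)$ is as claimed. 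This is the only place where the sectorial decomposition is invoked; the inequalities themselves are immediate from Lemma~\ref{thm: compression}. I do not anticipate a real obstacle here — the lemma is essentially a repackaging of interlacing plus an explicit extremizer — but one should be slightly mindful that the interval of phase values for the compression coincides with that of $C$, which is guaranteed by Lemma~\ref{thm: compression}.
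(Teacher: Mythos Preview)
Your proposal is correct and follows essentially the same route as the paper: the upper/lower bounds come from summing the interlacing inequalities of Lemma~\ref{thm: compression} (via Remark~\ref{rem: compression}), and tightness is witnessed by the very same extremizer $X = T^{-1}\begin{bmatrix} I_k & 0 \end{bmatrix}^*$ built from a sectorial decomposition $C=T^*DT$. The paper simply says \eqref{lemma4.4-2} ``can be shown similarly,'' so your explicit symmetric argument with $X = T^{-1}\begin{bmatrix}0 & I_k\end{bmatrix}^*$ is a slight elaboration rather than a departure.
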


\begin{proof}
Application of Lemma~\ref{thm: compression} and Remark~\ref{rem: compression} yields
\[
\sum_{i=1}^k \phi_i(X^* C X) \leq \sum_{i=1}^k \phi_i(C)
\]
for any full-rank matrix $X \!\in\! \mathbb{C}^{n \times k}$. Let $C = T^*DT$ be a sectorial decomposition. Then defining the full-rank $X = T^{-1}\begin{bmatrix} I_k & 0 \end{bmatrix}^*$ gives $\sum_{i=1}^k \phi_i(X^* C X) = \sum_{i=1}^k \phi_i(C)$, which proves (\ref{lemma4.4-1}). The validity of (\ref{lemma4.4-2}) can be shown similarly.
\end{proof}

\section{Compound numerical ranges and numerical ranges of compounds}
\label{sec:compound}
This section explores a useful notion known as the compound numerical range. It is crucial to deriving the majorization result in the succeeding section on products of matrices. While the compound numerical range is related to the numerical range of a compound matrix, we elucidate below that the former is more informative from the perspective of characterizing matrix phases.

Let $A \in \mathbb{C}^{n \times m}$ and $1 \leq k \leq \min\{n, m\}$. The \emph{$k$th compound} of $A$, denoted by $A_{(k)}$, is the $\left(\begin{smallmatrix}
n \\
k
\end{smallmatrix}\right)
\times \left(\begin{smallmatrix}
m \\
k
\end{smallmatrix}\right)$ matrix whose elements are
\begin{align*}
\det \left(A \left[\begin{array}{lll}
i_1,&\dots,&i_k \\
j_1,&\dots,&j_k
\end{array}\right]\right)
\end{align*}
arranged lexicographically, where
\begin{align*}
A \left[\begin{array}{lll}
i_1,&\dots,&i_k \\
j_1,&\dots,&j_k
\end{array}\right]
\end{align*}
represents the submatrix of $A$ consisting of rows $i_1,\dots,i_k$ and columns $j_1,\dots,j_k$.
It is known that the eigenvalues of the $k$th compound of a square $A$ are products ($k$ at a time) of the eigenvalues of $A$. In particular, $A_{(1)} = A$ and when $A\in \mathbb{C}^{n\times n}$,
$A_{(n)} = \det A$. Moreover, if $A$ is Hermitian (resp. unitary), $A_{(k)}$ is also Hermitian (resp. unitary). It also holds that $(AB)_{(k)} = A_{(k)}B_{(k)}$ by the
Binet-Cauchy theorem. For more details on compound matrices, we refer to \cite[Chapter 19]{Marshall} and \cite[Chapter~6]{Fiedler2008book}.

Given $A \in \mathbb{C}^{n \times n}$ and $k=1,\dots,n$, we define the $k$th compound spectrum as
\[
\Lambda_{(k)}(A) = \left\{\prod_{m=1}^k \lambda_{i_m}(A): 1 \leq i_1 < \dots < i_k \leq n \right\}.
\]
It follows that
$\Lambda_{(k)}(A) \subset W(A_{(k)})$, the numerical range of the compound matrix $A_{(k)}$. Note that even if $A$ is sectorial, its compound $A_{(k)}$ is not necessarily sectorial, which means we can have the origin lying in the interior of $W(A_{(k)})$ for a sectorial $A$. This shows that the numerical range of a compound matrix is not generally suitable for characterizing compound spectra from an angular point of view.

For $k=1,\dots,n$, define the $k$th compound numerical range of $A\in\mathbb{C}^{n\times n}$ as
\[
W_{(k)}(A) = \left\{\prod_{m=1}^k \lambda_m(X^*AX): X \in \mathbb{C}^{n \times k}
\text{ is isometric} \right\},
\]
and the $k$th compound angular numerical range of $A$ as
\[
W^{\prime}_{(k)}(A) = \left\{\prod_{m=1}^k \lambda_m(X^*AX): X \in \mathbb{C}^{n \times k} \text{ is full rank} \right\}.
\]
When $k = 1$, the above definitions specialize to the familiar notions of numerical range and angular numerical range, respectively. It is straightforward to see that if $0\notin W(A)$, then $0 \notin W_{(k)}(A)$. Furthermore, $W_{(k)}(A)$ is always compact, but is not convex in general.

The following lemma generalizes \cite[Theorem 1.7.6]{horntopics}, which covers the special case of $k=1$.

\begin{lemma} \label{lem: prod_inv}
Let $A, B \in \mathbb{C}^{n \times n}$ for which $B$ is sectorial. Then
\[
\Lambda_{(k)}(AB^{-1}) \subset W_{(k)}(A)/W_{(k)}(B).
\]
\end{lemma}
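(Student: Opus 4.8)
The plan is to reduce the statement to the scalar case $k=1$ (i.e., Theorem 1.7.6 of \cite{horntopics}) applied to the $k$th compound matrices, but carried out at the level of compressions rather than the full compounds. Fix $k$ and let $\mu \in \Lambda_{(k)}(AB^{-1})$, so $\mu = \prod_{m=1}^k \lambda_{i_m}(AB^{-1})$ for some eigenvalues of $AB^{-1}$, with associated (generalized) eigenspace. The goal is to produce a single full-rank $X \in \mathbb{C}^{n\times k}$ such that $\mu = \prod_m \lambda_m(X^*AX) \big/ \prod_m \lambda_m(X^*BX)$, which by definition of $W_{(k)}(A)$ and $W_{(k)}(B)$ (after normalizing $X$ to an isometry, which does not change the quotient since the scalar factors from $X = QR$ cancel) gives $\mu \in W_{(k)}(A)/W_{(k)}(B)$.

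First I would set $M = AB^{-1}$ and choose a $k$-dimensional $M$-invariant subspace $\mathcal{S}$ on which the eigenvalues of $M|_{\mathcal{S}}$ are exactly $\lambda_{i_1}(M),\dots,\lambda_{i_k}(M)$; such a subspace exists by picking an appropriate collection of Jordan chains (or just by Schur-triangularizing $M$ and taking the span of the relevant leading basis vectors after reordering). Then I would transport this to $B$: set $\mathcal{T} = B^{-1}\mathcal{S}$, which is again $k$-dimensional, and let the columns of $X$ be a basis of $\mathcal{T}$. The key algebraic observation is the compound identity $\det\big(X^* A X\big)\big/\det\big(X^* B X\big)$: writing $Y = BX$ (whose columns span $\mathcal{S}$), we have $X^* A X = X^* (AB^{-1}) B X = X^* M Y$, and I want to relate $\det(X^* M Y)$ to $\det(Y^* Y)$ times the product of the chosen eigenvalues of $M$. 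Concretely, since $\mathcal{S}$ is $M$-invariant, $M Y = Y G$ where $G \in \mathbb{C}^{k\times k}$ is the matrix of $M|_{\mathcal{S}}$ in the basis $Y$, so $\det(G) = \prod_m \lambda_{i_m}(M) = \mu$; meanwhile $X^* M Y = X^* Y G = (X^* B X) G$ because $Y = BX$. Taking determinants, $\det(X^*AX) = \det(X^*BX)\cdot \mu$, hence $\prod_m \lambda_m(X^*AX) = \mu \cdot \prod_m \lambda_m(X^*BX)$, i.e. $\mu = \prod_m \lambda_m(X^*AX)/\prod_m \lambda_m(X^*BX) \in W^{\prime}_{(k)}(A)/W^{\prime}_{(k)}(B)$.

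The remaining point — and what I expect to be the main obstacle — is upgrading this from the \emph{angular} compound numerical ranges $W^{\prime}_{(k)}$ to the \emph{compact} ones $W_{(k)}$ (isometric $X$), and checking that $X^*BX$ is genuinely nonsingular so that the quotient is well defined. Nonsingularity is not an issue: $B$ sectorial implies $0 \notin W(B)$, hence $x^* B x \neq 0$ for all $x \neq 0$, and more strongly $B$ restricted to any subspace is sectorial, so $X^*BX$ is sectorial and in particular invertible; thus $\prod_m \lambda_m(X^*BX) \neq 0$. For the isometry normalization, write $X = QR$ with $Q$ an $n\times k$ isometry and $R$ invertible (the QR factorization used already in Remark~\ref{rem: compression}); then $X^*AX = R^*(Q^*AQ)R$ and $X^*BX = R^*(Q^*BQ)R$, so $\det(X^*AX) = |\det R|^2 \det(Q^*AQ)$ and likewise for $B$, and the factor $|\det R|^2$ cancels in the ratio. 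Hence $\mu = \prod_m \lambda_m(Q^*AQ)/\prod_m \lambda_m(Q^*BQ) \in W_{(k)}(A)/W_{(k)}(B)$, completing the proof. One should double-check that $\prod_m \lambda_m(Q^* A Q) \in W_{(k)}(A)$ literally matches the definition (it does, since $Q$ is isometric), and that no sign or conjugation slips occur in the determinant bookkeeping; these are routine.
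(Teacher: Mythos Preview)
Your proof is correct and rests on the same determinant identity as the paper's: find an isometry $Q$ for which $\det(Q^*AQ)=\mu\,\det(Q^*BQ)$, so that $\mu$ is exhibited as a quotient of an element of $W_{(k)}(A)$ by one of $W_{(k)}(B)$. The difference lies in how the isometry is produced. The paper takes $X$ to be a matrix of $k$ left eigenvectors of $AB^{-1}$ and extracts $Q$ (their $U$) via the polar decomposition $X=UP$; this requires $AB^{-1}$ to be diagonalizable, and the general case is then recovered by a density-plus-continuity argument in the Hausdorff metric. You instead select a $k$-dimensional $AB^{-1}$-invariant subspace directly (via Schur triangularization with the chosen eigenvalues placed first), set $X$ to be a basis of its preimage under $B$, derive $X^*AX=(X^*BX)G$ with $\det G=\mu$, and pass to an isometry by QR, cancelling $|\det R|^2$ in the ratio. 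This handles the nondiagonalizable case in one stroke and avoids the limiting step entirely, at no extra cost. Your verification that $X^*BX$ is invertible because compressions of a sectorial matrix remain sectorial is exactly the right justification.
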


\begin{proof}
First consider the case where $AB^{-1}$ is diagonalizable. By the definition of eigenvectors and eigenvalues, there exists a full-rank $X=\begin{bmatrix}x_{i_1}&x_{i_2}&\dots&x_{i_k}\end{bmatrix}\in\mathbb{C}^{n\times k}$ such that
\[
X^* AB^{-1} = \mathrm{diag}\{\lambda_{i_1}(AB^{-1}),\lambda_{i_2}(AB^{-1}),\dots,\lambda_{i_k}(AB^{-1})\}X^*
\]
for $1 \leq i_1 < \dots < i_k \leq n$. Factorize $X$ as $X = UP$ by applying the polar decomposition, where $U \in \mathbb{C}^{n \times k}$ is isometric and $P \in \mathbb{C}^{k \times k}$ is positive definite. Consequently,
\[
U^* AB^{-1} = P^{-1}\mathrm{diag}\{\lambda_{i_1}(AB^{-1}),\lambda_{i_2}(AB^{-1}),\dots,\lambda_{i_k}(AB^{-1})\}PU^*.
\]
Post-multiplying both sides by $B U$ and taking the determinants yields
\[
\prod_{i=1}^k \lambda_i(U^* A U)= \prod_{m=1}^k \lambda_{i_m} (AB^{-1}) \prod_{i=1}^k \lambda_i(U^*B U).
\]
The claim then follows by dividing both sides by $\prod_{i=1}^k \lambda_i(U^*B U)$.

When $AB^{-1}$ is not diagonalizable, choose a sequence $\{A_i\}$ with limit $A$ such that $A_iB^{-1}$ is diagonalizable for all $i = 1, 2, \dots$. It follows from the arguments above that
\[
\Lambda_{(k)}(A_iB^{-1}) \subset W_{(k)}(A_i)/W_{(k)}(B) \text{ for all }i = 1, 2, \dots.
\]
Given an isometric $X\in\mathbb{C}^{n\times k}$, we have $\prod_{m=1}^k \lambda_m(X^*A_iX)\to \prod_{m=1}^k \lambda_m(X^*AX)$ by the continuity of eigenvalues. Correspondingly, $W_{(k)}(A_i) \to W_{(k)}(A)$ in the Hausdorff metric. Similarly, we have $\Lambda_{(k)}(A_iB^{-1}) \!\to\! \Lambda_{(k)}(AB^{-1})$. Thus the desired claim follows.
\end{proof}

The next result demonstrates that the compound numerical range provides a tighter characterization of the compound spectrum than the numerical range of a compound matrix.

\begin{corollary}\label{cor: spectrum}
$\Lambda_{(k)}(A) \subset W_{(k)}(A) \subset W(A_{(k)})$.
\end{corollary}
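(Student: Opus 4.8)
The plan is to establish the two inclusions in Corollary~\ref{cor: spectrum} separately, the first by a direct eigenvector argument and the second by a limiting/continuity argument reducing to the Binet-Cauchy identity.

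For the inclusion $\Lambda_{(k)}(A) \subset W_{(k)}(A)$, I would first treat the case where $A$ is diagonalizable. Given indices $1 \le i_1 < \dots < i_k \le n$, pick eigenvectors $x_{i_1}, \dots, x_{i_k}$ spanning an invariant subspace, apply a polar decomposition to $X = [x_{i_1} \ \cdots \ x_{i_k}] = UP$ with $U$ isometric and $P$ positive definite, and observe that $U^*AU = P(X^*AB^{-1}X)P^{-1}$-type conjugation (more directly: $AU = U(P \Lambda P^{-1})$ where $\Lambda = \mathrm{diag}\{\lambda_{i_m}\}$, so $U^*AU = P\Lambda P^{-1}$), whence $\prod_m \lambda_m(U^*AU) = \det(U^*AU) = \det \Lambda = \prod_m \lambda_{i_m}(A)$. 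This places the compound eigenvalue product in $W_{(k)}(A)$. The non-diagonalizable case follows by approximating $A$ with diagonalizable $A_i \to A$, using continuity of eigenvalues and compactness of $W_{(k)}(A)$ exactly as in the proof of Lemma~\ref{lem: prod_inv}. (Alternatively, this inclusion is immediate by applying Lemma~\ref{lem: prod_inv} with $B = I$, provided $A$ itself is invertible; the approximation handles the general case, or one takes $B=I$ after a shift.)

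For the inclusion $W_{(k)}(A) \subset W(A_{(k)})$, take any isometric $X \in \mathbb{C}^{n \times k}$. The key identity is that $X_{(k)} \in \mathbb{C}^{\binom{n}{k} \times 1}$ is a unit vector, since $(X^*X)_{(k)} = X_{(k)}^* X_{(k)} = (I_k)_{(k)} = 1$ by Binet-Cauchy and the fact that the $k$th compound of $I_k$ is the scalar $1$. Then, again by Binet-Cauchy, $X_{(k)}^* A_{(k)} X_{(k)} = (X^*AX)_{(k)} = \det(X^*AX) = \prod_{m=1}^k \lambda_m(X^*AX)$. Since $X_{(k)}$ is a unit vector in $\mathbb{C}^{\binom{n}{k}}$, the right-hand side lies in $W(A_{(k)})$ by definition of the numerical range. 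This shows every element of $W_{(k)}(A)$ belongs to $W(A_{(k)})$.

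I do not expect a serious obstacle here; the result is essentially a repackaging of Binet-Cauchy together with the polar-decomposition trick already used in Lemma~\ref{lem: prod_inv}. The one point requiring a little care is the first inclusion in the non-diagonalizable case, where one must confirm that the limiting argument stays inside the closed set $W_{(k)}(A)$ — but this is exactly the compactness of $W_{(k)}(A)$ noted just before Lemma~\ref{lem: prod_inv}, combined with continuity of eigenvalues under $X \mapsto X^*A_iX \mapsto X^*AX$. A secondary subtlety is making sure the map $X \mapsto X_{(k)}$ genuinely sends isometries to unit vectors and not merely to vectors of bounded norm; this is settled cleanly by the identity $X_{(k)}^*X_{(k)} = (X^*X)_{(k)} = 1$ rather than by estimating individual minors.
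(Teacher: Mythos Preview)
Your proposal is correct and follows essentially the same route as the paper. The second inclusion is identical to the paper's argument via Binet--Cauchy and the unit-vector property of $X_{(k)}$. For the first inclusion, the paper simply invokes Lemma~\ref{lem: prod_inv} with $B = I$ (noting $W_{(k)}(I) = \{1\}$); your direct eigenvector-plus-polar-decomposition argument is just that lemma's proof specialized to $B = I$, so it works, but note that your caveat ``provided $A$ itself is invertible'' is unnecessary---Lemma~\ref{lem: prod_inv} only requires $B$ to be sectorial, which $I$ is, so the inclusion holds for arbitrary $A$ without any approximation.
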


\begin{proof}
Letting $B = I$ in Lemma~\ref{lem: prod_inv} and noting that $W_{(k)}(I) = \{1\}$ yields the first inclusion. The second inclusion follows from the fact that for any isometric $X \in \mathbb{C}^{n \times k}$,
\[
\det(X^*AX) = (X^*AX)_{(k)} = X_{(k)}^*A_{(k)}X_{(k)}
\]
and $X_{(k)}$ is a normalized column vector, since
\[
X^*X = I \implies X^*_{(k)}X_{(k)} = I_{(k)} = 1.
\]
\end{proof}

The following lemma generalizes \cite[Theorem 1.7.8]{horntopics}, which covers the special case of $k=1$.
\begin{lemma} \label{lem: prod_range}
Let $A, B \in \mathbb{C}^{n \times n}$ for which $B$ is sectorial. Then
\[
\Lambda_{(k)}(AB) \subset W^{\prime}_{(k)}(A)W^{\prime}_{(k)}(B).
\]
\end{lemma}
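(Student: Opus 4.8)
The plan is to mimic the proof of Lemma~\ref{lem: prod_inv}, replacing the inverse $B^{-1}$ by $B$ itself, so that the sectorial hypothesis on $B$ is used only to guarantee that every compression $X^*BX$ is sectorial (hence nonsingular, by Lemma~\ref{thm: compression} together with Remark~\ref{rem: compression}), which is precisely what makes the quotient in Lemma~\ref{lem: prod_inv} unnecessary here: the product $W'_{(k)}(A)W'_{(k)}(B)$ is a genuine product of sets, so we need the factors $\prod_i\lambda_i(X^*BX)$ to be well defined but not to be inverted. First I would treat the diagonalizable case: assuming $AB$ is diagonalizable, pick eigenvectors $x_{i_1},\dots,x_{i_k}$ of $AB$ for the eigenvalues $\lambda_{i_1}(AB),\dots,\lambda_{i_k}(AB)$ with $1\le i_1<\dots<i_k\le n$, assemble them into the full-rank matrix $X=\begin{bmatrix}x_{i_1}&\cdots&x_{i_k}\end{bmatrix}\in\mathbb{C}^{n\times k}$, and write $X^*AB=\Lambda X^*$ where $\Lambda=\mathrm{diag}\{\lambda_{i_1}(AB),\dots,\lambda_{i_k}(AB)\}$.

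Next I would post-multiply this identity by $B^{-1}X$ to obtain $X^*AX = \Lambda X^*B^{-1}X$; this is legitimate because $B$ is sectorial hence nonsingular. Taking determinants gives $\det(X^*AX) = \bigl(\prod_{m=1}^k\lambda_{i_m}(AB)\bigr)\det(X^*B^{-1}X)$. The subtlety is that I now want a factor $\det(X^*BX)$, not $\det(X^*B^{-1}X)$. To convert, I would instead apply the identity differently: from $X^*AB=\Lambda X^*$, post-multiply by $X$ after first noting $X^*A B X = \Lambda X^* X$ is not directly what I want either, so the cleaner route is to run the argument of Lemma~\ref{lem: prod_inv} verbatim with $A$ replaced by $AB$ and $B$ replaced by $B$: since $(AB)B^{-1}=A$, Lemma~\ref{lem: prod_inv} already gives $\Lambda_{(k)}(A)\subset W_{(k)}(AB)/W_{(k)}(B)$, which is not the statement. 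Instead I would directly mirror the computation: choose the polar decomposition $X=UP$ with $U$ isometric and $P$ positive definite, deduce $U^*AB = P^{-1}\Lambda P U^*$, post-multiply by $U$ to get $U^*ABU = P^{-1}\Lambda P U^*U = P^{-1}\Lambda P$ — wait, $U^*U=I$ only if $k$ columns are orthonormal, which holds. But $ABU\ne A U \cdot \text{something}$, so to split $ABU$ I would write $U^*ABU=(U^*A\,U')(U'^*{}... )$; the honest decomposition is to insert $BU = U'\,R$ or, most simply, to post-multiply $U^*AB=P^{-1}\Lambda P U^*$ on the right by $U$ and on the relevant side manipulate $U^* A (BU)$. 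Recognizing $BU$ need not have orthonormal columns, I would instead post-multiply by $B^{-1}U$: $U^*A U = P^{-1}\Lambda P\,U^*B^{-1}U$, then take determinants: $\prod_{i=1}^k\lambda_i(U^*AU) = \prod_{m=1}^k\lambda_{i_m}(AB)\cdot\det(U^*B^{-1}U)$. To finish I would show $\det(U^*B^{-1}U) = 1/\det(V^*BV)$ for a suitable full-rank $V$, namely $V=B^{-1}U$: indeed $\det(V^*BV)=\det(U^*B^{-*}BB^{-1}U)=\det(U^*B^{-*}U)=\overline{\det(U^*B^{-1}U)}$, which is not quite a reciprocal, so the correct identity is simpler — since $U$ is isometric, $(U^*B^{-1}U)^{-1}$ is the Schur-type complement, but more to the point, for any nonsingular $B$ and isometric $U$, $\prod_i\lambda_i(U^*B^{-1}U)$ need not equal $1/\prod_i\lambda_i(W^*BW)$ for the same $W$; however it does equal $1/\prod_i\lambda_i(W^*BW)$ for $W = B^{-1}U(U^*B^{-1}U)^{-1/2}$ after normalization, and this $W$ is full rank, placing $\prod_i\lambda_i(W^*BW)\in W'_{(k)}(B)$. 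Thus $\prod_m\lambda_{i_m}(AB) = \prod_i\lambda_i(U^*AU)\cdot\prod_i\lambda_i(W^*BW)\in W'_{(k)}(A)W'_{(k)}(B)$, since $U$ isometric implies $\prod_i\lambda_i(U^*AU)\in W_{(k)}(A)\subset W'_{(k)}(A)$.

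For the non-diagonalizable case I would argue exactly as in Lemma~\ref{lem: prod_inv}: approximate $A$ by a sequence $A_j\to A$ with $A_jB$ diagonalizable for every $j$, note $\Lambda_{(k)}(A_jB)\subset W'_{(k)}(A_j)W'_{(k)}(B)\subset \overline{W'_{(k)}(A)}\,W'_{(k)}(B)$ after accounting for the fact that $W'_{(k)}$ is not closed — here I would lean on continuity of eigenvalues to pass $\Lambda_{(k)}(A_jB)\to\Lambda_{(k)}(AB)$ and on the fact that the relevant full-rank witnesses (built from eigenvectors of $A_jB$, which can be chosen to converge along a subsequence) have limits that remain full rank because the limiting eigenvectors of $AB$ corresponding to the chosen indices remain linearly independent; if independence degenerates in the limit, a perturbation argument within $W'_{(k)}$ (which is dense in its closure) absorbs it. I expect the main obstacle to be precisely this last bookkeeping with $W'_{(k)}$ versus its closure in the limiting argument, together with getting the algebraic identity in the diagonalizable case to produce a clean factor lying in $W'_{(k)}(B)$ rather than a reciprocal; the resolution is to observe that the set $\{\prod_i\lambda_i(X^*BX): X\text{ full rank}\}$ is closed under the operation $z\mapsto 1/z$ composed with the natural parametrization change $X\mapsto B^{-1}X$ up to normalization, so $1/\det(U^*B^{-1}U)\in W'_{(k)}(B)$ after all, which is exactly what the statement needs.
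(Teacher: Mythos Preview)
Your overall route is the same as the paper's: reduce to Lemma~\ref{lem: prod_inv} (applied with $B^{-1}$ in place of $B$) to land in $W_{(k)}(A)/W_{(k)}(B^{-1})$, and then show $1/W_{(k)}(B^{-1})\subset W'_{(k)}(B)$. You re-derive the first step by hand instead of simply citing Lemma~\ref{lem: prod_inv}, which is harmless but unnecessary.

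The genuine gap is in the second step. With $V=B^{-1}U$ you correctly compute $\det(V^*BV)=\det(U^*B^{-*}U)=\overline{\det(U^*B^{-1}U)}$ and then discard it as ``not quite a reciprocal''. That is exactly where the argument closes: write $d=\det(U^*B^{-1}U)$; then $\overline{d}\in W'_{(k)}(B)$ via $V$, and since $W'_{(k)}(B)$ is a cone (replace $V$ by $tV$, $t>0$), the positive multiple $1/d=\overline{d}/|d|^{2}$ also lies in $W'_{(k)}(B)$. This is precisely the paper's trick $c=|c|^{2}/\bar c$. Your alternative construction $W=B^{-1}U\,(U^*B^{-1}U)^{-1/2}$ is not a fix: the square root of the merely sectorial matrix $U^*B^{-1}U$ is not canonically defined, and a direct computation shows $\det(W^*BW)$ does not equal $1/\det(U^*B^{-1}U)$ in general, so the final sentence (``closed under $z\mapsto 1/z$ composed with $X\mapsto B^{-1}X$'') is an assertion, not a proof.

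Your worry about the non-diagonalizable case and the non-closedness of $W'_{(k)}$ is a symptom of not separating the two steps. If you first pass to the limit inside the \emph{compact} sets $W_{(k)}(A)$ and $W_{(k)}(B^{-1})$ (exactly as in Lemma~\ref{lem: prod_inv}) to obtain $\Lambda_{(k)}(AB)\subset W_{(k)}(A)/W_{(k)}(B^{-1})$, and only afterwards apply the purely algebraic inclusion $1/W_{(k)}(B^{-1})\subset W'_{(k)}(B)$, no closure issue arises. The perturbation patch you sketch is not needed.
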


\begin{proof}
By Lemma~\ref{lem: prod_inv}, we have
\[
\Lambda_{(k)}(AB) \subset W_{(k)}(A)/W_{(k)}(B^{-1}).
\]
We show below that
\[
1/W_{(k)}(B^{-1}) \subset W^{\prime}_{(k)}(B),
\]
from which the claimed result follows. To this end, let
$c \in 1/W_{(k)}(B^{-1})$. This means for some isometric $X \in \mathbb{C}^{n \times k}$, we have
\[
c=\prod_{m=1}^k \frac{1}{\lambda_m(X^*B^{-1}X)}= \prod_{m=1}^k\frac{1}{\lambda_m(X^*B^{-*} B^* B^{-1}X)} = \prod_{m=1}^k \frac{1}{\lambda_m(Y^* B^* Y)},
\]
where $Y = B^{-1}X$ is full rank. Noting that $c=|c|^2/\bar{c}$, we have
\begin{align*}
c=|c|^2 \prod_{m=1}^k \lambda_m(Y^*B^*Y)^*=|c|^2 \prod_{m=1}^k \lambda_m(Y^*BY) \in W^{\prime}_{(k)}(B),
\end{align*}
as desired.
\end{proof}

\section{Phases of matrix product}
The phases of an individual complex matrix and their properties have been the focus in Sections~\ref{phase def} to~\ref{sec:compression}. Here, we study the product of two sectorial matrices $A$ and $B$.

In view of the majorization result on the magnitudes of the product of two matrices as in (\ref{gainmajorization}), it would be desirable to have a phase counterpart of the form
\[
\phi (AB) \prec \phi(A)+\phi(B)
\]
for sectorial matrices $A, B$. We know that this is true when $A,B$ are unitary \cite{nudelman1958,Thompson}. Unfortunately, this fails to hold in general, as shown in the following example.

\begin{eg}
Let $A, B$ be $n \times n$ positive definite matrices, then $\phi (A)=\phi(B)=0$. However,
$AB$ is in general not positive definite, and hence $\phi(AB)\neq0$.
\end{eg}

Notwithstanding, the following weaker but very useful result can be derived.

\begin{theorem} \label{thm: product_majorization}
Let $A,B\in\mathbb{C}^{n\times n}$ be sectorial matrices with phases in $(\theta_1, \theta_1+\pi)$ and $(\theta_2,\theta_2+\pi)$, respectively, where $\theta_1\in [-\pi,\delta(A))$ and $\theta_2\in[-\pi,\delta(B))$. Let $\angle \lambda(AB)$ take values in $(\theta_1+\theta_2, \theta_1+\theta_2+2\pi)$. Then
\begin{align}
\label{thm:main result}
\angle\lambda(AB) \prec \phi(A) + \phi(B).
\end{align}
\end{theorem}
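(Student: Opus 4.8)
The plan is to derive the majorization relation one partial sum at a time, using the compound numerical range developed in Section~\ref{sec:compound} together with the interlacing-type extremal representation in Lemma~\ref{lem: extremal_rep}. The key observation is that for each $k=1,\dots,n$, the product $\prod_{m=1}^k e^{i\lambda\text{-phase}}$ of the $k$ largest-phase eigenvalues of $AB$ should be expressible as a point of $W^\prime_{(k)}(A)\,W^\prime_{(k)}(B)$ via Lemma~\ref{lem: prod_range}, and then the phase of such a point can be bounded above by $\sum_{i=1}^k\phi_i(A)+\sum_{i=1}^k\phi_i(B)$ using Lemma~\ref{lem: extremal_rep} applied separately to $A$ and to $B$.

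Concretely, I would proceed as follows. First, relabel the eigenvalues of $AB$ so that their chosen phases (in the interval $(\theta_1+\theta_2,\theta_1+\theta_2+2\pi)$) are non-increasing; fix $k\in\{1,\dots,n-1\}$ and consider the product $p_k=\prod_{m=1}^k\lambda_{i_m}(AB)$ over the $k$ eigenvalues of largest phase. By Lemma~\ref{lem: prod_range}, $p_k\in W^\prime_{(k)}(A)\,W^\prime_{(k)}(B)$, so $p_k=\bigl(\prod_{m=1}^k\lambda_m(X^*AX)\bigr)\bigl(\prod_{m=1}^k\lambda_m(Y^*BY)\bigr)$ for some full-rank $X,Y\in\mathbb{C}^{n\times k}$. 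Since $X^*AX$ is a compression of the sectorial $A$ (up to a congruence, cf.\ Remark~\ref{rem: compression}), it is itself sectorial, so $\angle\det(X^*AX)=\sum_{i=1}^k\phi_i(X^*AX)$, and likewise for $B$. Therefore $\angle p_k$, computed in the appropriate $2\pi$-interval, equals $\sum_{i=1}^k\phi_i(X^*AX)+\sum_{i=1}^k\phi_i(Y^*BY)$, which by Lemma~\ref{lem: extremal_rep} is at most $\sum_{i=1}^k\phi_i(A)+\sum_{i=1}^k\phi_i(B)$. On the other hand, because the $k$ chosen eigenvalues of $AB$ have the largest phases and all phases of $AB$ lie in an interval of length $2\pi$, one has $\angle p_k=\sum_{m=1}^k\angle\lambda_{i_m}(AB)=\sum_{i=1}^k\bigl(\angle\lambda(AB)\bigr)^\downarrow_i$. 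Combining gives the $k$th partial-sum inequality. For $k=n$ the inequality becomes an equality since $\angle\det(AB)=\angle\det A+\angle\det B=\sum_i\phi_i(A)+\sum_i\phi_i(B)$ and $\sum_i\angle\lambda_i(AB)=\angle\det(AB)$ modulo $2\pi$; a careful interval bookkeeping shows the two sides agree exactly under the stated normalization.

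The main obstacle is the \emph{bookkeeping of the $2\pi$-ambiguities}: the phase of a product of complex numbers equals the sum of their phases only modulo $2\pi$, and I must verify that, with the eigenvalue phases of $AB$ chosen in $(\theta_1+\theta_2,\theta_1+\theta_2+2\pi)$ and the phases of $A,B$ in their respective $\pi$-intervals, no spurious multiple of $2\pi$ is introduced when passing from $\angle p_k$ to $\sum\angle\lambda_{i_m}(AB)$ on one side and to $\sum\phi_i(X^*AX)+\sum\phi_i(Y^*BY)$ on the other. The point that makes this work is that each compression $X^*AX$ has all its phases in $(\theta_1,\theta_1+\pi)$ and each $Y^*BY$ in $(\theta_2,\theta_2+\pi)$, so the sum of the $k$ compression-phase sums lies in an interval of length $2k\pi$ that is consistent with the chosen branch for $\lambda(AB)$; I would verify this by a direct interval computation, and also confirm that the argument is insensitive to which representative branch $\theta_1,\theta_2$ within $[-\pi,\delta(\cdot))$ is selected, since shifting $\theta_1$ or $\theta_2$ shifts both sides of \eqref{thm:main result} by the same amount. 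A secondary technical point is the non-diagonalizable case, but this is handled exactly as in the proof of Lemma~\ref{lem: prod_inv} by a density/continuity argument, since both sides of the partial-sum inequalities are continuous in $A$ and $B$ on the (open) set of sectorial matrices.
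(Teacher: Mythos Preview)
Your proposal is correct and follows essentially the same route as the paper: Lemma~\ref{lem: prod_range} places $\prod_{i=1}^k\lambda_i(AB)$ in $W'_{(k)}(A)\,W'_{(k)}(B)$, and Lemma~\ref{lem: extremal_rep} then bounds the resulting phase sums, with equality at $k=n$ by congruence invariance. The only difference is cosmetic: the paper handles the $2\pi$-bookkeeping you flag by first rotating to $\hat A=e^{i(-\pi/2-\theta_1)}A$ and $\hat B=e^{i(-\pi/2-\theta_2)}B$ (precisely the shift-invariance you mention), which puts all phases of $\hat A,\hat B$ in $(-\tfrac{\pi}{2},\tfrac{\pi}{2})$ and all eigenvalue phases of $\hat A\hat B$ in $(-\pi,\pi)$, making the branch choices transparent; your remark on non-diagonalizability is unnecessary since Lemma~\ref{lem: prod_range} already absorbs that continuity argument.
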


\begin{proof}
Let $\hat{A}\!=\!e^{i(-\frac{\pi}{2}-\theta_1)}A$ and $\hat{B}\!=\!e^{i(-\frac{\pi}{2}-\theta_2)}B$. Then both $\hat{A}$ and $\hat{B}$ are sectorial matrices with phases in $(-\frac{\pi}{2},\frac{\pi}{2})$ and $\angle \lambda(\hat{A}\hat{B})$ takes values in $(-\pi,\pi)$. Label the eigenvalues of $\hat{A}\hat{B}$ so that $\angle\lambda_1(\hat{A}\hat{B}) \geq \angle\lambda_2(\hat{A}\hat{B})\geq \dots \geq \angle\lambda_n(\hat{A}\hat{B})$. Since $\phi_i(\hat{A})=\phi_i(A)-\frac{\pi}{2}-\theta_1$, $\phi_i(\hat{B})=\phi_i(B)-\frac{\pi}{2}-\theta_2$ and $\angle\lambda_i(\hat{A}\hat{B})=\angle\lambda_i(AB)-\pi-\theta_1-\theta_2$ for $i=1,\dots,n$, the inequality (\ref{thm:main result}) holds if and only if
\begin{align*}
\angle\lambda(\hat{A}\hat{B}) \prec \phi(\hat{A}) + \phi(\hat{B}).
\end{align*}

To show the above inequality, we note that $\prod_{i=1}^k\lambda_i(\hat{A}\hat{B})\!\in\!\Lambda_{(k)}(\hat{A}\hat{B})$, $k=1,\dots,n$. In view of Lemma~\ref{lem: prod_range}, it follows that
$\prod_{i=1}^k \lambda_i(\hat{A}\hat{B})\!\in\! W^{\prime}_{(k)}(\hat{A})W^{\prime}_{(k)}(\hat{B})$.
This means there exist full-rank matrices $X,Y\!\in\!\mathbb{C}^{n\times k}$ such that
\begin{align*}
\prod_{i=1}^k \lambda_i(\hat{A}\hat{B})=\prod_{i=1}^k \lambda_i(X^*\hat{A}X)\prod_{i=1}^k \lambda_i(Y^*\hat{B}Y).
\end{align*}
Since phases of $\hat{A}$ and $\hat{B}$ are in $(-\frac{\pi}{2}, \frac{\pi}{2})$ and $\angle \lambda(\hat{A}\hat{B})$ takes values in $(-\pi,\pi)$, we have
\begin{align}
\sum_{i=1}^k\angle\lambda_i(\hat{A}\hat{B})&=\sum_{i=1}^k \angle\lambda_i(X^*\hat{A}X)+\sum_{i=1}^k \angle\lambda_i(Y^*\hat{B}Y)\nonumber\\
&=\sum_{i=1}^k \phi_i(X^*\hat{A}X)+\sum_{i=1}^k \phi_i(Y^*\hat{B}Y)\nonumber\\
&\leq \sum_{i=1}^k\phi_i(\hat{A})+\sum_{i=1}^k\phi_i(\hat{B}),\label{lastine}
\end{align}
where the inequality (\ref{lastine}) is due to Lemma~\ref{lem: extremal_rep}. Note that when $k=n$, the inequality (\ref{lastine}) becomes equality, as matrix phases are invariant under congruence transformation. This completes the proof.
\end{proof}


For any $A, B \in \mathbb{C}^{n \times n}$, it is known that
\[
\sigma_i(AB) \leq \sigma_i(A)\sigma_1(B) \quad \text{for } i = 1, \dots, n.
\]
This is a special form of the more general Lidskii-Wielandt inequality, which can be found in \cite{Bhatia2001survey}. While it would be desirable to have a counterpart to this inequality for sectorial $A$ and $B$ in the form of
\[
\angle \lambda_i(AB) \leq \phi_i(A) + \phi_1(B) \quad \text{for } i = 1, \dots, n,
\]
this is generally not true. For a simple counterexample, let $B = I$ and note that
\[
\angle \lambda_i(A) \leq \phi_i(A) \quad \text{for } i = 1, \dots, n
\]
does not hold in general.

\section{Phases of matrix sum}

Given $\alpha, \beta$ for which $0\leq \beta -\alpha < 2\pi$, define
\[
\mathcal{C}[\alpha, \beta]=\left\{C\in \mathbb{C}^{n\times n}: C \text{ is sectorial and }\phi_1(C)\!\leq\! \beta,\ \phi_n(C)\!\geq\! \alpha\right\}.
\]
Clearly, $\mathcal{C}[\alpha, \beta]$ is a cone. The following theorem can be inferred from the discussions on \cite[p.~2]{ZhangFuzhen2015}. A proof is presented here for completeness.

\begin{theorem} \label{thm: matrix_sum}
Let $A, B \in \mathcal{C}[\alpha, \beta]$ with $\beta\!-\!\alpha<\pi$. Then $A+B \in \mathcal{C}[\alpha, \beta]$.
\end{theorem}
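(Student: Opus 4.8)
\textbf{Proof proposal for Theorem \ref{thm: matrix_sum}.}

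The plan is to work through the numerical range directly, using the geometric characterization of $\overline{\phi}$ and $\underline{\phi}$ as the angles subtended by the two supporting rays of $W(C)$. The key observation is that for any $C$ in $\mathcal{C}[\alpha,\beta]$ with $\beta-\alpha<\pi$, the angular numerical range $W'(C)$ is contained in the closed sector $\mathcal{S}[\alpha,\beta] = \{re^{i\psi} : r>0,\ \alpha\leq\psi\leq\beta\}$; conversely, if $C$ is sectorial and $W'(C)\subseteq\mathcal{S}[\alpha,\beta]$ then $\phi_1(C)\leq\beta$ and $\phi_n(C)\geq\alpha$, i.e. $C\in\mathcal{C}[\alpha,\beta]$. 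This equivalence follows immediately from the minimax formulas $\overline{\phi}(C)=\max_{\|x\|=1}\angle x^*Cx$ and $\underline{\phi}(C)=\min_{\|x\|=1}\angle x^*Cx$ together with the fact that $W'(C)$ is the cone generated by $W(C)$, which lies in an open half-plane since $0\notin W(C)$.

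First I would record the sectoriality of $A+B$. For any unit vector $x$, $x^*(A+B)x = x^*Ax + x^*Bx$, and both summands are nonzero complex numbers lying in the sector $\mathcal{S}[\alpha,\beta]$, which has aperture $\beta-\alpha<\pi$. A sum of two nonzero vectors in a common sector of aperture strictly less than $\pi$ is again nonzero (the two vectors cannot be anti-parallel, and more precisely their sum lies in the same sector: writing each as a positive combination of the two boundary directions $e^{i\alpha},e^{i\beta}$, which are linearly independent over $\mathbb{R}_{>0}$, the sum is again such a positive combination). Hence $x^*(A+B)x\neq 0$ for all unit $x$, so $0\notin W(A+B)$ and $A+B$ is sectorial. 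This same computation shows $x^*(A+B)x\in\mathcal{S}[\alpha,\beta]$, so $W'(A+B)\subseteq\mathcal{S}[\alpha,\beta]$, and by the equivalence above $A+B\in\mathcal{C}[\alpha,\beta]$.

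The step requiring the most care is the elementary planar claim that a nonzero vector in a closed sector of aperture less than $\pi$, added to another such, stays in that sector and in particular is nonzero — this is where the hypothesis $\beta-\alpha<\pi$ is essential and must be invoked explicitly. I would phrase it as: if $u = s_1 e^{i\alpha} + t_1 e^{i\beta}$ and $v = s_2 e^{i\alpha} + t_2 e^{i\beta}$ with $s_j,t_j\geq 0$ not both zero, then $u+v = (s_1+s_2)e^{i\alpha} + (t_1+t_2)e^{i\beta}$ has nonnegative, not-both-zero coefficients, and since $\{e^{i\alpha},e^{i\beta}\}$ is an $\mathbb{R}$-basis of $\mathbb{C}$ (as $\beta-\alpha\notin\pi\mathbb{Z}$), such a representation forces $u+v$ into the sector and away from the origin. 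One also needs that every point of $\mathcal{S}[\alpha,\beta]$ admits such a representation, which is just the statement that the sector is the positive cone on its two edges — immediate once the aperture is below $\pi$. Everything else is bookkeeping: translating between the sector containment and the phase inequalities via the minimax formulas (\ref{pminimax}), which is the only nontrivial tool invoked and is already available in the excerpt.
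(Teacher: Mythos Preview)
Your proof is correct and follows essentially the same approach as the paper's: both argue pointwise that $x^*(A+B)x = x^*Ax + x^*Bx$ lies in the sector $\mathcal{S}[\alpha,\beta]$ and is nonzero, then translate this into the phase bounds via the extremal characterizations of $\overline{\phi}$ and $\underline{\phi}$. The only cosmetic difference is that the paper invokes the scalar property~(\ref{boundangle}), i.e., $\angle(a+b)\in\mathrm{Co}\{\angle a,\angle b\}$, directly, whereas you reprove this fact via the $\{e^{i\alpha},e^{i\beta}\}$ basis representation; your argument as written needs a one-line patch for the degenerate case $\alpha=\beta$ (where the two edge directions coincide), but that case is trivial.
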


\begin{proof}
Since $A, B \in \mathcal{C}[\alpha, \beta]$, $\beta-\alpha<\pi$, there exists an open half plane containing both $W(A)$ and $W(B)$. In view of the subadditivity of numerical range \cite[Property 1.2.7]{horntopics}, i.e., $W(A+B)\subset W(A)+W(B)$ , $W(A+B)$ is contained in the same open half plane. Thus, $A+B$ is sectorial. Moreover,
\begin{align}
\phi_1(A+B)&=\max_{x\in\mathbb{C}^n,\|x\|=1}\angle(x^*Ax+x^*Bx)\nonumber\\
&\leq \max_{x\in\mathbb{C}^n,\|x\|=1}\max (\angle x^*Ax,\angle x^*Bx)\label{ine1}\\
&=\max (\phi_1(A),\phi_1(B))\nonumber\\
&\leq \beta,\nonumber\\
\phi_n(A+B)&=\min_{x\in\mathbb{C}^n,\|x\|=1}\angle(x^*Ax+x^*Bx)\nonumber\\
&\geq \min_{x\in\mathbb{C}^n,\|x\|=1}\min (\angle x^*Ax,\angle x^*Bx)\label{ine2}\\
&=\min (\phi_n(A),\phi_n(B))\nonumber\\
&\geq \alpha,\nonumber
\end{align}
where the inequalities (\ref{ine1}) and (\ref{ine2}) follow from property (\ref{boundangle}) of complex scalars. The proof is complete.
\end{proof}

This theorem says that when $\beta-\alpha<\pi$, $\mathcal{C}[\alpha, \beta]$ is a convex cone. A special case is given by $\mathcal{C}[0, 0]$, i.e., the cone of positive definite matrices. 

Theorem~\ref{thm: matrix_sum} provides an upper and a lower bound on the phases of $A+B$. It would be interesting to explore for more results regarding $\phi_i(A+B)$, such as a majorization property similar to Theorem~\ref{thm: product_majorization} or involving other geometric descriptions of $\phi_i(A+B)$.

\section{Rank robustness against perturbations}

The rank of $I + AB$ plays an important role in the field of systems and control~\cite{Zhou}. It is straightforward to see that if $\sigma(A)$ and $\sigma(B)$ are sufficiently small, then $I+AB$ has full rank. If the magnitudes of $A$ are known in advance, a problem well studied \cite{Zhou,Green} in the field of robust control is how large the magnitudes of $B$ need to be before $I+AB$ loses rank by 1, by 2, ..., or by $k$.
It can be inferred from the Schmidt-Mirsky theorem \cite[Theorem 4.18]{StewartSun1990} that for a given matrix $A \in \mathbb{C}^{n\times n}$,
\begin{equation}
\label{magnitudes-perturbation}
\inf \left\{ \overline{\sigma}(B): \mathrm{rank}(I+AB) \leq n-k \right\} = \sigma_k(A)^{-1}.
\end{equation}
Define $\mathcal{B}[\gamma]=\{C\in\mathbb{C}^{n\times n}: \overline{\sigma}(C)\leq \gamma \}$ to be the set of matrices whose largest singular values are bounded above by $\gamma>0$. Then (\ref{magnitudes-perturbation}) is equivalent to that $\mathrm{rank}(I+AB) > n-k$ for all $B\in\mathcal{B}[\gamma]$ if and only if $\gamma<{\sigma_k(A)}^{-1}$.

On the other hand, intuitively we can see that if $\phi (A)$ and $\phi (B)$ are sufficiently small in magnitudes, then $I+AB$ has full rank. This motivates us to establish a phase counterpart to the analysis of rank robustness.

For $\alpha\in[0,k\pi)$, define
\[\mathcal{C}_k[\alpha] = \left\{C\in\mathbb{C}^{n\times n}: C \text{ is sectorial and }{\sum_{i=1}^k \phi_i(C)\leq \alpha},\sum_{i=n-k+1}^n \phi_i(C)\geq -\alpha\right\}.
\]
This set is a cone but possibly non-convex unless $k=1$ and $\alpha< \pi/2$. Clearly, $\mathcal{C}_1[\alpha]$ is simply $\mathcal{C}[-\alpha,\alpha]$. \mbox{The next} theorem is concerned with the robustness of the rank of $I+AB$ under phaseal perturbations on $B$.

\begin{theorem}\label{rankrobustness}
Let $A \in \mathbb{C}^{n \times n}\!$ be sectorial with phases in $(-\pi,\pi)$ and $k=1,\dots,n$. Then $\mathrm{rank}\,(I + AB)>n-k$ for all $B\in \mathcal{C}_k[\alpha], \ \alpha\in[0,k\pi)$,
\mbox{if and only if}
\[
\alpha < \min \Bigg\{k\pi-\sum_{i=1}^k \phi_i(A),\ k\pi+\!\!\sum_{i=n-k+1}^n \!\!\!\phi_i(A)\Bigg\}.
\]
\end{theorem}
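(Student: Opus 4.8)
The plan is to translate the rank condition on $I+AB$ into a statement about eigenvalues of $AB$ and then invoke the majorization result of Theorem~\ref{thm: product_majorization}. The key observation is that $\mathrm{rank}(I+AB)\le n-k$ is equivalent to $-1$ being an eigenvalue of $AB$ with geometric (hence algebraic, after a perturbation argument) multiplicity at least $k$; more robustly, $\mathrm{rank}(I+AB)>n-k$ for \emph{all} $B$ in a set $\mathcal S$ is equivalent to the statement that no $B\in\mathcal S$ has $AB$ admitting $k$ eigenvalues equal to $-1$, i.e. $\angle\lambda_{i_1}(AB)=\dots=\angle\lambda_{i_k}(AB)=\pi$ (up to the appropriate $2\pi$-branch) for some index set of size $k$. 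Since $A$ has phases in $(-\pi,\pi)$ and $B\in\mathcal C_k[\alpha]$ can be taken to have phases in an interval of length $<\pi$ around $0$, one can arrange for $\angle\lambda(AB)$ to take values in an interval of length $2\pi$ containing $\pi$ in a controlled way, so that ``$AB$ has $k$ eigenvalues at $-1$'' becomes ``$\sum$ of the top $k$ or bottom $k$ values of $\angle\lambda(AB)$ reaches $k\pi$ or $-k\pi$.''

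First I would set up the branch bookkeeping carefully: with $\phi(A)\subset(-\pi,\pi)$ and $B\in\mathcal C_k[\alpha]$, Theorem~\ref{thm: matrix_sum}-type reasoning is not directly available, but we do know each $B\in\mathcal C_k[\alpha]$ is sectorial with $\phi_1(B)$ and $\phi_n(B)$ constrained through the partial sums; the cleanest route is to note $\sum_{i=1}^k\phi_i(B)\le\alpha$ and $\sum_{i=n-k+1}^n\phi_i(B)\ge-\alpha$. Applying Theorem~\ref{thm: product_majorization} to $A$ and $B$ (choosing the value interval for $\angle\lambda(AB)$ appropriately, say centered so that $\pi$ and $-\pi$ are the extreme obstructions), we get
\[
\sum_{i=1}^k \angle\lambda_i(AB)\le \sum_{i=1}^k\phi_i(A)+\sum_{i=1}^k\phi_i(B)\le \sum_{i=1}^k\phi_i(A)+\alpha,
\]
and symmetrically $\sum_{i=n-k+1}^n\angle\lambda_i(AB)\ge \sum_{i=n-k+1}^n\phi_i(A)-\alpha$. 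If $\alpha$ is strictly below the stated minimum, the right-hand side of the first inequality is strictly less than $k\pi$ and the left-hand side of the second is strictly greater than $-k\pi$, so $AB$ cannot have $k$ eigenvalues with angle $\pi$ (equivalently at $-1$); hence $\mathrm{rank}(I+AB)>n-k$. This gives the ``if'' direction.

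For the ``only if'' direction I would construct, for any $\alpha$ at least as large as the minimum, an explicit $B\in\mathcal C_k[\alpha]$ with $\mathrm{rank}(I+AB)\le n-k$. Using the sectorial decomposition $A=T^*DT$ and the extremal configuration from Lemma~\ref{lem: extremal_rep}, one picks $B$ of the form $B=T^{-1}\,\mathrm{diag}\{\dots\}\,T^{-*}$ designed so that $AB$ has a $k$-dimensional invariant subspace on which it acts as $-I$; the diagonal entries of $B$'s sectorial part are chosen to have phases summing to exactly the slack $k\pi-\sum_{i=1}^k\phi_i(A)$ (or the other bound), which keeps $B\in\mathcal C_k[\alpha]$ precisely when $\alpha$ reaches the threshold. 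The main obstacle I anticipate is the branch/interval bookkeeping in the first part --- ensuring that the interval in which $\angle\lambda(AB)$ is read off genuinely makes ``eigenvalue at $-1$'' correspond to the partial-sum reaching $\pm k\pi$, and that Theorem~\ref{thm: product_majorization}'s hypothesis $\angle\lambda(AB)\subset(\theta_1+\theta_2,\theta_1+\theta_2+2\pi)$ is compatible with this choice; with $\phi(A)\subset(-\pi,\pi)$ one has latitude in $\theta_1$, and I would exploit that freedom, together with the fact that $\phi(B)$ lies in a short interval, to pin down the branch. The construction in the converse is routine once the first part's bookkeeping is fixed.
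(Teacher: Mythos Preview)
Your proposal is correct and follows essentially the same route as the paper: sufficiency via Theorem~\ref{thm: product_majorization} to bound $\sum_{i=1}^k\angle\lambda_i(AB)$ and $\sum_{i=n-k+1}^n\angle\lambda_i(AB)$, and necessity via the explicit choice $B=T^{-1}ET^{-*}$ from a sectorial decomposition $A=T^*DT$, with $E$ diagonal so that $DE$ has $k$ entries equal to $-1$. Your flagged ``branch bookkeeping'' concern is legitimate but is handled with exactly the same level of informality in the paper; the aside that ``$\phi(B)$ lies in a short interval'' is not accurate in general (membership in $\mathcal{C}_k[\alpha]$ constrains only the partial sums, not $\phi_1(B)-\phi_n(B)$), but you never actually use it, so it does no harm.
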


\begin{proof}
First, label the eigenvalues of $AB$ so that $\angle\lambda_1(AB)\!\geq\!\angle\lambda_2(AB)\!\geq\! \dots \geq \angle\lambda_n(AB)$. Since $A$ and $B$ are sectorial, $I + AB$ loses rank by $k$ only if
\[
\angle\lambda_1(AB) = \dots = \angle\lambda_k(AB) = \pi \quad \text{or} \quad
\angle\lambda_{n-k+1}(AB) = \dots = \angle\lambda_n(AB) = -\pi.
\]
Sufficiency thus follows from Theorem~\ref{thm: product_majorization}, whereby
\[
\sum_{i=1}^k \angle\lambda_i(AB) \leq \sum_{i=1}^k \phi_i(A) + \phi_i(B) < k\pi
\]
and
\[
\sum_{i=n-k+1}^n \angle\lambda_i(AB) \geq \sum_{i=n-k+1}^n \phi_i(A) + \phi_i(B) > -k\pi.
\]

For necessity, suppose to the contraposition that $\sum_{i=1}^k \phi_i(A) + \alpha \geq k\pi$. We construct below a matrix $B\in\mathcal{C}_k[\alpha]$ such that the rank of $I + AB$ is $n-k$. Let $A = T^*DT$ be a sectorial decomposition. Define $B = T^{-1} E T^{-*}$, where $E = \mathrm{diag}\{e_1, e_2, \dots, e_n\}$ satisfies
\begin{align*}
|e_i| & = 1 \qquad\;\; \text{for } i = 1, \dots, k, \\
\phi_i(A) + \angle e_i & = \pi \qquad\;\, \text{for } i = 1, \dots, k,  \quad \text{and}\\
e_i & =1 \qquad\;\; \text{for } i = k+1, \dots, n.
\end{align*}
Clearly, $B$ is sectorial. Moreover, $\displaystyle\sum_{i=1}^k \phi_i(B) = \displaystyle\sum_{i=1}^k \angle e_i \leq \alpha$, $\displaystyle\sum_{i=n-k+1}^n \phi_i(B)\geq -\alpha$ and
\[
AB = T^*DT T^{-1} E T^{-*} = T^* DE T^{-*}
\]
has $k$ eigenvalues at $-1$, whereby the rank of $I + AB$ is $n-k$.

The necessity of $\sum_{i=n-k+1}^n \phi_i(A) -\alpha > -k\pi$ can be shown similarly.
\end{proof}

The specialization of Theorem \ref{rankrobustness} to the case where $k=1$ is of particular importance in the study of robust control. Specifically, the theorem says that for a sectorial matrix $A \in \mathbb{C}^{n\times n}$ with phases in $(-\pi,\pi)$ and $\alpha\in[0,\pi)$, there holds
$
\mathrm{rank}\,(I+AB) =n
$
for all $B\in \mathcal{C}_1[\alpha]$ if and only if
$\alpha< \min \{\pi-\phi_1(A), \pi+\phi_n(A) \}$.

Combining this understanding and (\ref{magnitudes-perturbation}), we have the following result. The proof is straightforward and is thus omitted for brevity.
\begin{theorem}\label{thm: mix}
Let $A\in\mathbb{C}^{n\times n}$ be sectorial with phases in $(-\pi,\pi)$. Then
$
\mathrm{rank}\,(I+AB)=n
$
for all $B\in \mathcal{B}[\gamma]\cup \mathcal{C}_1[\alpha]$ with $\gamma>0$, $\alpha\in[0,\pi)$, if and only if $\gamma< {\sigma_1(A)}^{-1}$ and $\alpha< \min \{\pi-\phi_1(A), \pi+\phi_n(A) \}$.
\end{theorem}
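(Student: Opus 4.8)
The plan is to observe that the assertion ``$\mathrm{rank}\,(I+AB)=n$ for all $B\in\mathcal{B}[\gamma]\cup\mathcal{C}_1[\alpha]$'' is, because the quantifier ranges over a \emph{union} of sets, logically equivalent to the conjunction of the two assertions
\[
\mathrm{rank}\,(I+AB)=n \text{ for all } B\in\mathcal{B}[\gamma]
\quad\text{and}\quad
\mathrm{rank}\,(I+AB)=n \text{ for all } B\in\mathcal{C}_1[\alpha].
\]
Thus the theorem follows immediately once each of these two component statements is identified with the corresponding scalar condition.

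For the first component I would invoke the equivalence already recorded after (\ref{magnitudes-perturbation}), specialized to $k=1$: since $A$ is sectorial it is in particular nonsingular, so $\sigma_1(A)>0$, and $\mathrm{rank}\,(I+AB)>n-1$ for all $B\in\mathcal{B}[\gamma]$ holds precisely when $\gamma<\sigma_1(A)^{-1}$. For the second component I would quote the $k=1$ specialization of Theorem~\ref{rankrobustness}, stated in the paragraph immediately preceding the present theorem: with $\alpha\in[0,\pi)$, $\mathrm{rank}\,(I+AB)=n$ for all $B\in\mathcal{C}_1[\alpha]$ holds precisely when $\alpha<\min\{\pi-\phi_1(A),\pi+\phi_n(A)\}$; here one uses that $A$ being sectorial with phases in $(-\pi,\pi)$ makes $\phi_1(A)$ and $\phi_n(A)$ well-defined.

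Conjoining the two equivalences yields exactly the claimed condition $\gamma<\sigma_1(A)^{-1}$ \emph{and} $\alpha<\min\{\pi-\phi_1(A),\pi+\phi_n(A)\}$, completing the argument. There is no genuine obstacle here; the only points needing a word of care are that the decomposition over the union is valid (an intersection would \emph{not} decompose this way, but a union does), that the inequalities in both component conditions are strict so they conjoin cleanly, and that the hypothesis ``$A$ sectorial with phases in $(-\pi,\pi)$'' is exactly what legitimizes applying both cited results simultaneously. This is why the paper calls the proof straightforward and omits it.
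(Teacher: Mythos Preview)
Your proposal is correct and matches exactly the reasoning the paper alludes to: the authors explicitly write ``Combining this understanding and (\ref{magnitudes-perturbation}), we have the following result. The proof is straightforward and is thus omitted for brevity,'' and your decomposition of the universal quantifier over the union $\mathcal{B}[\gamma]\cup\mathcal{C}_1[\alpha]$ into two independent equivalences is precisely that straightforward argument.
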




\section{Sectorial matrix completion}
The matrix completion problem is to determine the unspecified entries of a partial matrix so that the completed matrix has certain desired properties.
Consider partial matrices of the form
\begin{align*}
C=\left[
\begin{matrix}
C_{11}& \cdots&C_{1,1+p}& &?\\
\vdots& & &\ddots& \\
C_{p+1,1} & & & &C_{n-p,n}\\
&\ddots& & &\vdots\\
?& &C_{n,n-p}&\cdots& C_{nn}
\end{matrix}\right],
\end{align*}
where $C_{ij}\in\mathbb{C}^{n_i\times n_j}$ is specified for $|i-j|\leq p$ with $p$ a fixed integer $0\leq p\leq n-1$. The unspecified blocks are represented by question marks. Such partial matrices are called $p$-banded. See \cite{BakonyiWoerdeman} for comprehensive discussions of matrix completions. It is shown in \cite{Grone1984} that $C$ admits a positive semidefinite completion if and only if
\begin{align*}
\left[\begin{matrix}
C_{ll} &\cdots&C_{l,l+p}\\
\vdots& &\vdots\\
C_{l+p,l}&\cdots&C_{l+p,l+p}
\end{matrix}\right]\geq0, \text{ for }l=1,\dots,n-p.
\end{align*}
In a similar spirit, we have the following result.
\begin{theorem}
A $p$-banded partial matrix $C$ admits a completion in $\mathcal{C}[\alpha,\beta]$ with $0\leq\beta-\alpha<\pi$ if and only if
\begin{align*}
\left[\begin{matrix}
C_{ll} &\cdots&C_{l,l+p}\\
\vdots& &\vdots\\
C_{l+p,l}&\cdots&C_{l+p,l+p}
\end{matrix}\right]\in\mathcal{C}[\alpha,\beta], \text{ for } l=1,\dots,n-p.
\end{align*}
\end{theorem}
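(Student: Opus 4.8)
The plan is to prove the two directions separately, with the necessity (``only if'') direction being essentially immediate and the sufficiency (``if'') direction requiring an actual construction. For necessity, suppose $C$ has a completion $\tilde C \in \mathcal{C}[\alpha,\beta]$. Each diagonal block
\[
C^{(l)} = \begin{bmatrix} C_{ll} & \cdots & C_{l,l+p} \\ \vdots & & \vdots \\ C_{l+p,l} & \cdots & C_{l+p,l+p} \end{bmatrix}
\]
is a compression of $\tilde C$ by the isometry that selects the block-rows/columns indexed by $l, l+1, \dots, l+p$. By Lemma~\ref{thm: compression} (applied after a rotation bringing the relevant $\pi$-interval into standard position, or directly since $\mathcal{C}[\alpha,\beta]$ membership is a statement about $\phi_1$ and $\phi_n$), $C^{(l)}$ is sectorial with $\phi_1(C^{(l)}) \le \phi_1(\tilde C) \le \beta$ and $\phi_n(C^{(l)}) \ge \phi_n(\tilde C) \ge \alpha$. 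Hence $C^{(l)} \in \mathcal{C}[\alpha,\beta]$ for each $l = 1, \dots, n-p$.

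For sufficiency, the approach is to reduce to the positive semidefinite banded completion theorem of \cite{Grone1984} via a change of variable. The key observation is that $\mathcal{C}[\alpha,\beta]$ with $\beta - \alpha < \pi$ is contained in the open half-plane $\{z : \mathrm{Re}(e^{-i\mu} z) > 0\}$ for $\mu = (\alpha+\beta)/2$, so membership in $\mathcal{C}[\alpha,\beta]$ is equivalent to a positive-definiteness-type condition on the Hermitian part of $e^{-i\mu} C$ together with a bound $\delta(\cdot) \le \beta - \alpha$ on the field angle. I would first normalize by replacing $C$ with $e^{-i\mu}C$ so that the target cone becomes $\mathcal{C}[-\rho,\rho]$ with $\rho = (\beta-\alpha)/2 < \pi/2$; the banded pattern and the hypotheses are preserved. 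The natural candidate completion is then obtained by writing each specified block of the Hermitian part $H = (C + C^*)/2$ and skew part, and invoking \cite{Grone1984} on $H$ to get a positive semidefinite (indeed positive definite) completion $\tilde H$ of the Hermitian data, while completing the skew-Hermitian part $S = (C - C^*)/2$ by zeros in the unspecified positions; set $\tilde C = \tilde H + \tilde S$. One then checks $\tilde C \in \mathcal{C}[-\rho,\rho]$.

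The main obstacle is that the zero-completion of the skew part need not keep the phases within $(-\rho,\rho)$: positive definiteness of $\tilde H$ guarantees $0 \notin W(\tilde C)$ and $\phi(\tilde C) \subset (-\pi/2,\pi/2)$, but the precise bounds $\phi_1(\tilde C) \le \rho$, $\phi_n(\tilde C) \ge -\rho$ are a statement about $H^{-1/2} S H^{-1/2}$ (whose eigenvalues are $\tan$ of the phases) and there is no reason a crude completion controls it. The fix I would pursue is a more careful construction exploiting the chordal/banded structure: the banded positive definite completion admits the maximum-entropy / chordal completion formula expressing $\tilde C^{-1}$ as a sum of the inverses of the specified diagonal blocks padded with zeros minus the inverses of the overlaps (the clique--separator formula for a chordal graph). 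Since each specified block lies in $\mathcal{C}[-\rho,\rho]$, its inverse lies in $\mathcal{C}[-\rho,\rho]$ as well (phases of $C^{-1}$ are the negatives of those of $C$, and $\mathcal{C}[-\rho,\rho]$ is symmetric under negation), and by Theorem~\ref{thm: matrix_sum} the cone $\mathcal{C}[-\rho,\rho]$ is convex and closed under sums for $\rho < \pi/2$. One must also handle the subtracted overlap inverses — these have their signs reversed, so the argument needs the overlaps to lie in $\mathcal{C}[-\rho,\rho] \cap (-\mathcal{C}[-\rho,\rho])$, which forces them into the positive definite cone; this is automatic since overlaps of consecutive cliques in the banded pattern are themselves diagonal sub-blocks, but one should verify the convex-combination bookkeeping closes. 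Concluding, $\tilde C^{-1} \in \mathcal{C}[-\rho,\rho]$, hence $\tilde C = (\tilde C^{-1})^{-1} \in \mathcal{C}[-\rho,\rho]$, and undoing the rotation gives $\tilde C \in \mathcal{C}[\alpha,\beta]$.
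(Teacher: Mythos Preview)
Your necessity argument is correct and matches the paper's: each maximal specified diagonal block is a compression of any completion, and Lemma~\ref{thm: compression} gives the phase bounds.

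Your sufficiency argument, however, has a genuine gap. The clique--separator (maximum-entropy) formula you invoke is a theorem about \emph{Hermitian} positive definite banded completions; you are applying it directly to the non-Hermitian sectorial matrix $C$, which is not justified. More seriously, even granting a non-Hermitian analog of the formula, your cone argument for why $\tilde C^{-1}\in\mathcal{C}[-\rho,\rho]$ does not close. You correctly note that the subtracted separator-inverse terms are the obstruction, and you propose to resolve this by requiring the separators to lie in $\mathcal{C}[-\rho,\rho]\cap(-\mathcal{C}[-\rho,\rho])$. But for $0<\rho<\pi/2$ this intersection is empty: a sectorial matrix cannot have its phases simultaneously in $[-\rho,\rho]$ and in $[\pi-\rho,\pi+\rho]$. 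The separators are sub-blocks of the given cliques and hence sectorial with phases in $[-\rho,\rho]$ by interlacing, but they are certainly not positive definite in general, so the subtraction cannot be absorbed by convexity of the cone. Even in the purely positive definite case ($\rho=0$) the clique--separator identity yields a positive definite matrix not because ``sum of PD minus sum of PD is PD'' but because of a specific Schur-complement cancellation; that mechanism is what you are missing.

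The paper's route avoids all of this. The key observation is that
\[
C\in\mathcal{C}[\alpha,\beta]
\iff
e^{i(\frac{\pi}{2}-\beta)}C+e^{-i(\frac{\pi}{2}-\beta)}C^*\ge 0
\ \text{and}\
e^{i(-\frac{\pi}{2}-\alpha)}C+e^{-i(-\frac{\pi}{2}-\alpha)}C^*\ge 0,
\]
which turns the single sectorial completion problem into \emph{two} independent Hermitian PSD banded completion problems. Each of these is solved by the standard PSD completion (e.g.\ the central completion $X=BE^\dagger F$), and in the base case $n=3$, $p=1$ this produces two linear equations in the two unknown blocks $X_{13}$ and $Y_{31}^*$; the coefficient determinant is $2i\sin(\beta-\alpha)\neq 0$, so the system is solvable. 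This two-rotation trick is the missing idea in your proposal.
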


\begin{proof}
The necessity follows from Lemma~\ref{thm: compression}. It remains to show the sufficiency. Without loss of generality, assume $n=3$ and $p=1$. The general case can be shown by an additional induction.

Let
\begin{align*}
C=\left[\begin{matrix}
C_{11}&C_{12}&X_{13}\\
C_{21}&C_{22}&C_{23}\\
Y_{31}&C_{32}&C_{33}
\end{matrix}\right],
\end{align*}
where $X_{13}$ and $Y_{31}$ are to be determined so that
$C\in \mathcal{C}[\alpha,\beta]$, which is equivalent to requiring the two inequalities
\begin{align}
\label{ieq: rotation1}
e^{i(\frac{\pi}{2}-\beta)}C+e^{-i(\frac{\pi}{2}-\beta)}C^*&\geq0,\\ e^{i(-\frac{\pi}{2}-\alpha)}C+e^{-i(-\frac{\pi}{2}-\alpha)}C^*&\geq0\label{ieq: rotation2},
\end{align}
hold simultaneously.
To find such $X_{13}$ and $Y_{31}$, we partition  $e^{i(\frac{\pi}{2}-\beta)}C+e^{-i(\frac{\pi}{2}-\beta)}C^*$ with compatible dimensions into
\begin{align*}
e^{i(\frac{\pi}{2}-\beta)}C+e^{-i(\frac{\pi}{2}-\beta)}C^*=\left[\begin{matrix}
A&B&X\\B^*&E&F\\X^*&F^*&G
\end{matrix}\right].
\end{align*}
When $\begin{bmatrix}C_{11}&C_{12}\\C_{21}&C_{22}\end{bmatrix},\begin{bmatrix}C_{22}&C_{23}\\C_{32}&C_{33}\end{bmatrix}\in\mathcal{C}[\alpha,\beta]$, both $\begin{bmatrix}
A&B\\B^*&E
\end{bmatrix}$ and $\begin{bmatrix}
E&F\\F^*&G
\end{bmatrix}$ are positive semidefinite.
By \cite[Theorem 3.2]{Smith2008}, if we let $X=BE^\dagger F$, i.e.,
\begin{equation}\label{inequality1}
e^{i(\frac{\pi}{2}-\beta)}X_{13}+e^{-i(\frac{\pi}{2}-\beta)}Y^*_{31}=BE^\dagger F,  \end{equation}
where $E^\dagger$ is the Moore-Penrose pseudoinverse of $E$, then the inequality (\ref{ieq: rotation1}) holds.
Similarly, we partition
\begin{align*}
e^{i(-\frac{\pi}{2}-\alpha)}C+e^{-i(-\frac{\pi}{2}-\alpha)}C^*=\left[\begin{matrix}
\tilde{A}&\tilde{B}&\tilde{X}\\\tilde{B}^*&\tilde{E}&\tilde{F}\\\tilde{X}^*&\tilde{F}^*&\tilde{G}
\end{matrix}\right]
\end{align*}
and let $\tilde{X}=\tilde{B}\tilde{E}^\dagger\tilde{F}$, i.e.,
\begin{equation}\label{inequality2}
e^{i(-\frac{\pi}{2}-\alpha)}X_{13}+e^{-i(-\frac{\pi}{2}-\alpha)}Y^*_{31}=\tilde{B}\tilde{E}^{\dagger}\tilde{F}.
\end{equation}
Then the inequality (\ref{ieq: rotation2}) holds.
Finally, solving equations (\ref{inequality1}) and (\ref{inequality2}) together yields desired $X_{13}$ and $Y_{31}$ so that $C\in \mathcal{C}[\alpha,\beta]$.
\end{proof}

\section{Sectorial matrix decomposition}
In this section, we discuss a matrix decomposition problem, which can be regarded as a dual problem of the sectorial matrix completion studied in the previous section. A $p$-banded matrix
\begin{align*}
C=\left[
\begin{matrix}
C_{11}& \cdots&C_{1,p+1}& &0\\
\vdots& & &\ddots& \\
C_{p+1,1} & & & &C_{n-p,n}\\
&\ddots& & &\vdots \\
0& &C_{n,n-p}&\cdots& C_{nn}
\end{matrix}\right],
\end{align*}
where $C_{ij}\in\mathbb{C}^{n_i\times n_j}$, is said to admit a positive semidefinite decomposition if it holds $C=C_1+\dots+C_{n-p}$ as in Figure~\ref{fig:banded decomposition}, with $C_l=\mathrm{diag}\{0, \tilde{C}_l, 0\}$ and $\tilde{C}_l\geq0$ for $l=1, \dots, n-p$. It is shown in \cite{Martin1965} that $C$ admits a positive semidefinite decomposition if and only if $C\geq0$.

\begin{figure}[htb]
\centering
\includegraphics[scale=0.4]{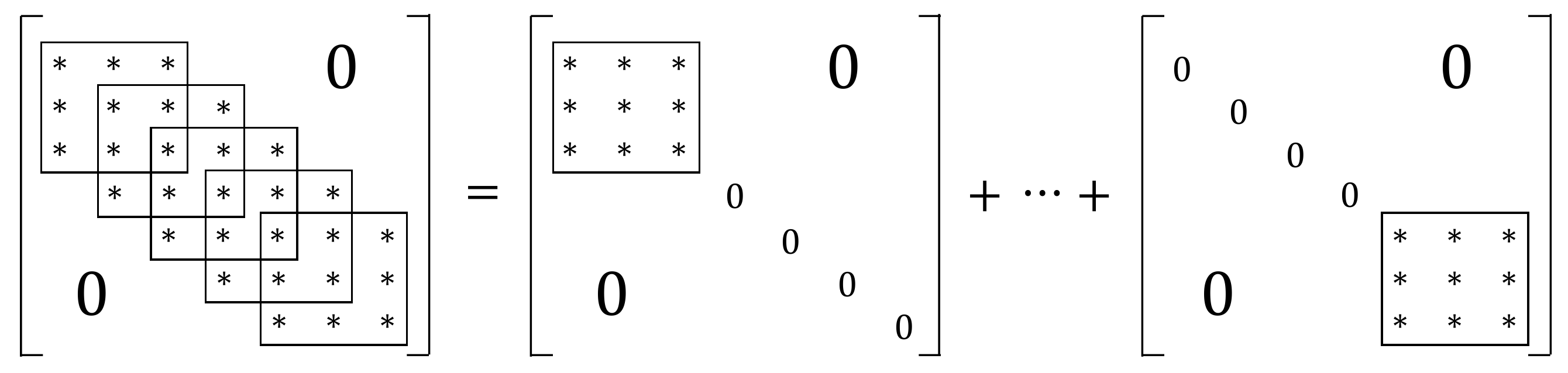}
\caption{Banded matrix decomposition.}
\label{fig:banded decomposition}
\end{figure}

Extending to the realm of sectorial matrices, we say a matrix $C$ admits a decomposition in $\mathcal{C}[\alpha,\beta]$ with $0\leq \beta-\alpha<\pi$ if it holds $C=C_1+\dots+C_{n-p}$ with $C_l=\mathrm{diag}\{0, \tilde{C}_l, 0\}$ and $\tilde{C}_l\in\mathcal{C}[\alpha,\beta]$ for $l=1, \dots, n-p$. We have the following result.

\begin{theorem}
\label{thm: banded decomposition}
A $p$-banded matrix $C$ admits a decomposition in $\mathcal{C}[\alpha,\beta]$ if and only if $C\in\mathcal{C}[\alpha,\beta]$.
\end{theorem}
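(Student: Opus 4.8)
The plan is to prove Theorem~\ref{thm: banded decomposition} by reducing the sectorial case to the positive semidefinite decomposition result of Martin and Speiser, exactly in the way the completion theorem of the previous section reduced to Grone et al. The necessity direction is immediate: if $C = C_1 + \dots + C_{n-p}$ with each $\tilde C_l \in \mathcal{C}[\alpha,\beta]$, then since $\mathcal{C}[\alpha,\beta]$ is a convex cone when $\beta - \alpha < \pi$ (Theorem~\ref{thm: matrix_sum}), and each $C_l$ is a congruence-padding of an element of $\mathcal{C}[\alpha,\beta]$ by zeros --- hence still has numerical range in the relevant sector --- the sum $C$ lies in $\mathcal{C}[\alpha,\beta]$. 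One only has to check that $\mathrm{diag}\{0,\tilde C_l,0\}$ does not introduce the origin into the interior of the numerical range in a bad way, but since we are only claiming membership of the \emph{sum} $C$, which is assumed sectorial in the hypothesis we are trying to contradict, this is clean.

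For sufficiency, suppose $C \in \mathcal{C}[\alpha,\beta]$ with $0 \le \beta - \alpha < \pi$. The key observation is that $C \in \mathcal{C}[\alpha,\beta]$ is equivalent to the simultaneous positive semidefiniteness of the two rotated Hermitian parts
\begin{align*}
H_1 &= e^{i(\frac{\pi}{2}-\beta)}C + e^{-i(\frac{\pi}{2}-\beta)}C^*, \\
H_2 &= e^{i(-\frac{\pi}{2}-\alpha)}C + e^{-i(-\frac{\pi}{2}-\alpha)}C^*,
\end{align*}
just as in the completion proof. Both $H_1$ and $H_2$ are $p$-banded Hermitian matrices (the rotation and addition of the conjugate transpose preserve the banded zero pattern), and both are positive semidefinite. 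By the Martin--Speiser theorem each admits a banded positive semidefinite decomposition: $H_1 = \sum_{l=1}^{n-p} \mathrm{diag}\{0, P_l, 0\}$ and $H_2 = \sum_{l=1}^{n-p} \mathrm{diag}\{0, Q_l, 0\}$ with $P_l, Q_l \ge 0$ supported on the $l$th diagonal window. The plan is then to solve, for each $l$, the linear system
\[
e^{i(\tfrac{\pi}{2}-\beta)}\tilde C_l + e^{-i(\tfrac{\pi}{2}-\beta)}\tilde C_l^* = P_l, \qquad
e^{i(-\tfrac{\pi}{2}-\alpha)}\tilde C_l + e^{-i(-\tfrac{\pi}{2}-\alpha)}\tilde C_l^* = Q_l
\]
for the window block $\tilde C_l$, set $C_l = \mathrm{diag}\{0, \tilde C_l, 0\}$, and verify that $\sum_l C_l = C$ and each $\tilde C_l \in \mathcal{C}[\alpha,\beta]$ by construction (the two displayed equations are precisely the defining conditions). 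Solvability of this $2\times 2$-like system in the unknowns $\tilde C_l$ and $\tilde C_l^*$ hinges on the two rotation angles $\tfrac{\pi}{2}-\beta$ and $-\tfrac{\pi}{2}-\alpha$ being distinct modulo $\pi$, which holds exactly because $\beta - \alpha < \pi$; one recovers $\tilde C_l$ as an explicit invertible linear combination of $P_l$ and $Q_l$.

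The main obstacle is the compatibility/consistency step: I must ensure that the blockwise solutions $\tilde C_l$ obtained from the two separate Martin--Speiser decompositions assemble into matrices $C_l$ whose sum is \emph{exactly} $C$ (not just whose two rotated Hermitian parts sum correctly), and that the decomposition of $H_1$ and that of $H_2$ can be chosen \emph{on the same support windows}. The former is handled because a complex matrix is determined by any two of its rotated Hermitian parts taken at non-congruent-mod-$\pi$ angles, so matching both rotated parts of the sum to those of $C$ forces the sum to be $C$; but to invoke this I need the two decompositions to share the block-window structure, which is automatic since Martin--Speiser produces a decomposition indexed by the \emph{same} $n-p$ windows determined solely by the band pattern $p$. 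As in the completion theorem, it suffices to treat $n = 3$, $p = 1$ in detail --- there $H_1 = \mathrm{diag}\{P_1,0\} + \mathrm{diag}\{0,P_2\}$ with $2\times 2$ overlap, similarly for $H_2$, and the per-window linear solve is the same $2$-by-$2$ inversion appearing in the completion proof --- and then lift to general $n$ and $p$ by an induction on the number of bands, exactly the induction alluded to there.
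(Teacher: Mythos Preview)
Your argument is essentially the paper's: translate $\mathcal{C}[\alpha,\beta]$-membership into positive semidefiniteness of the two rotated Hermitian parts $H_1,H_2$, decompose each of those $p$-banded PSD matrices along the diagonal windows, and recover each $\tilde C_l$ by inverting the linear map $M\mapsto\bigl(e^{i(\pi/2-\beta)}M+e^{-i(\pi/2-\beta)}M^*,\ e^{i(-\pi/2-\alpha)}M+e^{-i(-\pi/2-\alpha)}M^*\bigr)$. The only cosmetic difference is that you invoke Martin--Speiser as a black box, whereas the paper reproduces that step explicitly via the generalized Schur complement choice $X=B^*A^\dagger B$.

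Two small corrections are worth making. First, in your necessity paragraph, sectoriality of the sum $C$ is \emph{not} ``assumed in the hypothesis we are trying to contradict''; it has to be proved, and the clean reason is that the $n-p$ windows cover every block index, so for any nonzero $x$ at least one term $(x|_l)^*\tilde C_l(x|_l)$ lies in the open sector, whence so does $x^*Cx$. Second, the $2\times2$ linear system for $\tilde C_l$ has determinant $2i\sin\!\bigl(\pi-(\beta-\alpha)\bigr)$, which vanishes when $\alpha=\beta$, not when $\beta-\alpha=\pi$; the degenerate case $\alpha=\beta$ (where $e^{-i\alpha}C$ is positive definite and a single PSD decomposition already does the job) needs a one-line separate treatment. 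The paper's own proof glosses over both of these points as well.
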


\begin{proof}
We first show the necessity. From $\tilde{C}_l\in\mathcal{C}[\alpha,\beta]$, we have
\begin{align*}
e^{i(\frac{\pi}{2}-\beta)}\tilde{C}_l+e^{-i(\frac{\pi}{2}-\beta)}\tilde{C}^*_l\geq0 \text{ and }
e^{i(-\frac{\pi}{2}-\alpha)}\tilde{C}_l+e^{-i(-\frac{\pi}{2}-\alpha)}\tilde{C}^*_l\geq0.
\end{align*}
Then,
\begin{equation*}
\begin{split}
e^{i(\frac{\pi}{2}-\beta)}C+e^{-i(\frac{\pi}{2}-\beta)}C^*&=\sum_{l=1}^{n-p}\left(e^{i(\frac{\pi}{2}-\beta)}C_l+e^{-i(\frac{\pi}{2}-\beta)}C_l^*\right)\geq0,\\
e^{i(-\frac{\pi}{2}-\alpha)}C+e^{-i(-\frac{\pi}{2}-\alpha)}C^*&=\sum_{l=1}^{n-p}\left(e^{i(-\frac{\pi}{2}-\alpha)}C_l+e^{-i(-\frac{\pi}{2}-\alpha)}C_l^*\right)\geq0,
\end{split}
\end{equation*}
from which $C\in\mathcal{C}[\alpha,\beta]$ follows.

We will show the sufficiency by construction. Without loss of generality, assume $n=3$ and $p=1$. The general case can be shown by an additional induction.

Let
\begin{align*}
C=\left[\begin{matrix}
C_{11}&C_{12}&0\\
C_{21}&C_{22}&C_{23}\\
0&C_{32}&C_{33}
\end{matrix}\right]= \left[\begin{matrix}
C_{11}&C_{12}&0\\
C_{21}&X_{22}&0\\
0&0&0
\end{matrix}\right]+\left[\begin{matrix}
0&0&0\\
0&Y_{22}&C_{23}\\
0&C_{32}&C_{33}
\end{matrix}\right],
\end{align*}
where $X_{22}$ and $Y_{22}$ are to be determined so that $\begin{bmatrix}
C_{11}&C_{12}\\
C_{21}&X_{22}
\end{bmatrix}, \begin{bmatrix}
Y_{22}&C_{23}\\
C_{32}&C_{33}
\end{bmatrix}\in\mathcal{C}[\alpha,\beta]$. By $C\in\mathcal{C}[\alpha,\beta]$, we have
\begin{align}
e^{i(\frac{\pi}{2}-\beta)}C+e^{-i(\frac{\pi}{2}-\beta)}C^*&\geq0,\\
e^{i(-\frac{\pi}{2}-\alpha)}C+e^{-i(-\frac{\pi}{2}-\alpha)}C^*&\geq0\label{ineq2:decompostion}.
\end{align}
Partitioning $e^{i(\frac{\pi}{2}-\beta)}C+e^{-i(\frac{\pi}{2}-\beta)}C^*$ with compatible dimensions into
\begin{align*}
e^{i(\frac{\pi}{2}-\beta)}C+e^{-i(\frac{\pi}{2}-\beta)}C^*=\left[\begin{matrix}
A&B&0\\B^*&E&F\\0&F^*&G
\end{matrix}\right]=\left[\begin{matrix}
A&B&0\\
B^*&X&0\\
0&0&0
\end{matrix}\right]+\left[\begin{matrix}
0&0&0\\
0&Y&F\\
0&F^*&G
\end{matrix}\right],
\end{align*}
we have $\begin{bmatrix}
A&B\\B^*&E\end{bmatrix}\geq 0$, which implies $\mathcal{R}(B)\subset\mathcal{R}(A)$ \cite[Theorem 1.19]{ZhangFuzhen2006Schur}, where $\mathcal{R}$ denotes the range.
Moreover, if we set $X=B^*A^{\dagger}B$, we can see that the generalized Schur complement of $A$ in $\begin{bmatrix}
A&B\\B^*&X
\end{bmatrix}$ equals $0$. Thus, by a property of generalized Schur complement \cite[Theorem 1.20]{ZhangFuzhen2006Schur}, we have $\begin{bmatrix}A&B\\B^*&X\end{bmatrix}\geq 0$.

It remains to show that $Y\geq 0$ and $\begin{bmatrix}Y&F\\F^*&G\end{bmatrix}\geq 0$. Since $Y=E-X=E-B^*A^{\dagger}B$ is the generalized Schur complement of $A$ in $\begin{bmatrix}A&B\\B^*&E\end{bmatrix}$, we have $Y\geq 0$.
Furthermore, by
\begin{align*}
\left[\begin{matrix}
A&B&0\\B^*&E&F\\0&F^*&G
\end{matrix}\right]\geq 0,
\end{align*}
we can see
\begin{align*}
\begin{bmatrix}E&F\\F^*&G\end{bmatrix}-\begin{bmatrix}B^*\\0\end{bmatrix}A^{\dagger}\begin{bmatrix}B&0\end{bmatrix}=\begin{bmatrix}E-B^*A^{\dagger}B&F\\F^*&G\end{bmatrix}=\begin{bmatrix}Y&F\\F^*&G\end{bmatrix}\geq 0.   \end{align*}
Therefore, we have an equation of $X_{22}$:
\begin{align}\label{eq1:decomposition}
e^{i(\frac{\pi}{2}-\beta)}X_{22}+e^{-i(\frac{\pi}{2}-\beta)}X_{22}^*=B^*A^\dagger B.
\end{align}
Similarly, we partition
\begin{align*}
e^{i(-\frac{\pi}{2}-\alpha)}C+e^{-i(-\frac{\pi}{2}-\alpha)}C^*=\left[\begin{matrix}
\tilde{A}&\tilde{B}&0\\\tilde{B}^*&\tilde{E}&\tilde{F}\\0&\tilde{F}^*&\tilde{G}
\end{matrix}\right]=\left[\begin{matrix}
\tilde{A}&\tilde{B}&0\\
\tilde{B}^*&\tilde{X}&0\\
0&0&0
\end{matrix}\right]+\left[\begin{matrix}
0&0&0\\
0&\tilde{Y}&\tilde{F}\\
0&\tilde{F}^*&\tilde{G}
\end{matrix}\right]
\end{align*}
and let $\tilde{X}=\tilde{B}^*\tilde{A}^\dagger\tilde{B}$, i.e.,
\begin{align}\label{eq2:decomposition}
e^{i(-\frac{\pi}{2}-\alpha)}X_{22}+e^{-i(-\frac{\pi}{2}-\alpha)}X_{22}^*=\tilde{B}^*\tilde{A}^\dagger \tilde{B}.
\end{align}
Then $\begin{bmatrix}\tilde{A}&\tilde{B}\\\tilde{B}^*&\tilde{X}\end{bmatrix}\geq 0, \begin{bmatrix}\tilde{Y}&\tilde{F}\\\tilde{F}^*&\tilde{G}\end{bmatrix}\geq 0$. Finally, solving equations (\ref{eq1:decomposition}) and (\ref{eq2:decomposition}) together yields desired $X_{22}$.
\end{proof}

\section{Kronecker and Hadamard products}
The Kronecker product of $A\!\in\!\mathbb{C}^{n\times n}$ and $B\in\mathbb{C}^{m\times m}$, denoted by $A\otimes B$, is given by
\begin{align*}
A \otimes B = \left[
\begin{matrix}
a_{11}B & \cdots & a_{1n}B\\
\vdots & &\vdots\\
a_{n1}B &\cdots&a_{nn}B
\end{matrix}\right]\in\mathbb{C}^{nm\times nm}.
\end{align*}
It is known that the singular values of $A\otimes B$ are given by
$\sigma_i(A)\sigma_j(B), 1\leq i\leq n, 1\leq j\leq m$ \cite[Theorem 4.2.15]{horntopics}.
Regarding the phases of $A\otimes B$, we have the following result.

\begin{theorem} \label{thm: kronecker}
Let $A\in\mathbb{C}^{n\times n}$ and $B\in\mathbb{C}^{m\times m}$ be sectorial matrices with phases in $(\theta_1, \theta_1+\pi)$ and $(\theta_2,\theta_2+\pi)$, where $\theta_1\in[-\pi, \delta(A))$ and $\theta_2 \in [-\pi,\delta(B))$, respectively. If $\phi_1(A) + \phi_1(B) - \phi_n(A) - \phi_n(B) < \pi$, then $A\otimes B$ is sectorial and its phases are given by
$\phi_i(A) + \phi_j(B), 1 \leq i \leq n, 1 \leq j \leq m$.
\end{theorem}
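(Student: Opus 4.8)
The plan is to exploit the sectorial decomposition and the multiplicative structure of the Kronecker product. Let $A = T_1^* D_1 T_1$ and $B = T_2^* D_2 T_2$ be sectorial decompositions, where $D_1 = \mathrm{diag}\{e^{i\phi_1(A)},\dots,e^{i\phi_n(A)}\}$ and $D_2 = \mathrm{diag}\{e^{i\phi_1(B)},\dots,e^{i\phi_m(B)}\}$. Using the mixed-product property $(X_1 Y_1)\otimes(X_2 Y_2) = (X_1\otimes X_2)(Y_1\otimes Y_2)$ together with $(X^*)\otimes(Y^*) = (X\otimes Y)^*$, we get
\[
A\otimes B = (T_1\otimes T_2)^*\,(D_1\otimes D_2)\,(T_1\otimes T_2),
\]
and $T_1\otimes T_2$ is nonsingular since $T_1,T_2$ are. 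Moreover $D_1\otimes D_2$ is a diagonal unitary matrix whose diagonal entries are exactly $e^{i(\phi_i(A)+\phi_j(B))}$, $1\le i\le n$, $1\le j\le m$. So the only thing standing between this factorization and the conclusion is showing that it is a genuine \emph{sectorial} decomposition in the sense of Section~\ref{phase def}: that the angles $\phi_i(A)+\phi_j(B)$ all lie in a common open $\pi$-interval.

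First I would establish that $A\otimes B$ is sectorial. The extreme angles among the $\phi_i(A)+\phi_j(B)$ are $\overline{\phi}(A)+\overline{\phi}(B) = \phi_1(A)+\phi_1(B)$ (the maximum) and $\underline{\phi}(A)+\underline{\phi}(B) = \phi_n(A)+\phi_n(B)$ (the minimum), and the hypothesis $\phi_1(A)+\phi_1(B)-\phi_n(A)-\phi_n(B)<\pi$ says precisely that their spread is less than $\pi$. Hence all diagonal entries of $D_1\otimes D_2$ lie on an arc of the unit circle strictly shorter than $\pi$, so $W(D_1\otimes D_2)$ — the convex hull of those points — omits the origin, i.e.\ $D_1\otimes D_2$ is sectorial. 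Since $W'(D_1\otimes D_2) = W'((T_1\otimes T_2)^*(D_1\otimes D_2)(T_1\otimes T_2)) = W'(A\otimes B)$ (congruence preserves the angular numerical range), $A\otimes B$ is sectorial too.

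Next, once $A\otimes B$ is known to be sectorial, its phases are by definition the arguments of the diagonal entries of the diagonal unitary factor in \emph{any} sectorial decomposition, taken in the appropriate $\pi$-interval — this is well defined because, as recalled in Section~\ref{phase def} (citing \cite{ZhangFuzhen2015}), the diagonal unitary factor is unique up to permutation. Since we have exhibited one sectorial decomposition $A\otimes B = (T_1\otimes T_2)^*(D_1\otimes D_2)(T_1\otimes T_2)$, the phases of $A\otimes B$ are exactly the arguments of the diagonal entries of $D_1\otimes D_2$, namely $\phi_i(A)+\phi_j(B)$ for $1\le i\le n$, $1\le j\le m$. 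One should check consistency of the interval: with $\phi(A)\subset(\theta_1,\theta_1+\pi)$ and $\phi(B)\subset(\theta_2,\theta_2+\pi)$ the sums lie in $(\theta_1+\theta_2,\theta_1+\theta_2+2\pi)$, and the spread bound lets us select a sub-$\pi$-interval of the form $(\theta,\theta+\pi)$ with $\theta\in[-\pi,\delta(A\otimes B))$, consistent with the convention; this is the only mildly delicate bookkeeping point.

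The main obstacle, such as it is, is not any hard estimate but making the argument airtight that one valid sectorial decomposition suffices to read off \emph{all} the phases — that is, invoking the uniqueness (up to permutation) of the diagonal unitary factor correctly, and handling the $\pi$-interval convention so that the listed angles are indeed the canonical phases and not some $2\pi$-shifted competitor. Everything else is the mixed-product identity for Kronecker products plus the observation that the spread hypothesis is exactly the sectoriality condition for the product. No continuity or limiting argument is needed here, in contrast to the proof of Lemma~\ref{lem: prod_inv}.
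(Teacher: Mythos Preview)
Your proposal is correct and follows essentially the same approach as the paper: take sectorial decompositions of $A$ and $B$, apply the mixed-product property to obtain $A\otimes B=(T_1\otimes T_2)^*(D_1\otimes D_2)(T_1\otimes T_2)$, and use the spread hypothesis to conclude this is a sectorial decomposition whose diagonal unitary factor has arguments $\phi_i(A)+\phi_j(B)$. The paper's proof is slightly terser on why the spread condition yields sectoriality, but the arguments are otherwise identical.
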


\begin{proof}
Let $A=T^*DT$ and $B=R^*ER$ be sectorial decompositions of $A$ and $B$, respectively. Then,
\begin{align}
A\otimes B=(T^*DT)\otimes (R^*ER)=(T\otimes R)^*(D\otimes E)(T \otimes R),\label{sectorial decompositionotimes}
\end{align}
where $T\otimes R$ is nonsingular and $D\otimes E$ is diagonal unitary. Note that the eigenvalues of $D\otimes E$ are given by $\lambda_i(D)\lambda_j(E), 1 \leq i\leq n, 1\leq j \leq m$, where
\[
\angle (\lambda_i(D)\lambda_j(E))=\angle\lambda_i(D)+\angle\lambda_j(E )=\phi_i(A)+\phi_j(B).
\]
Since $\phi_1(A) + \phi_1(B) - \phi_n(A) - \phi_n(B) < \pi$, it follows that $A\otimes B$ is sectorial and (\ref{sectorial decompositionotimes}) is a sectorial decomposition of $A\otimes B$. Furthermore, the phases of $A\otimes B$ are given by $\angle(\lambda_i(D)\lambda_j(E))$, i.e., $\phi_i(A) + \phi_j(B)$, $1 \leq i\leq n, 1\leq j \leq m$.
\end{proof}


As for the Hadamard product $A\odot B$, a notably elegant result on its singular values is that \cite[Theorem 5.5.4]{horntopics}
\[
\sigma(A\odot B)\prec_w \sigma(A)\odot\sigma(B).
\]
One may expect a phase counterpart in the form of $\phi (A \odot B) \prec_w \phi(A)+\phi(B)$. The following example demonstrates that this is not true in general.

\begin{eg}
Let
\begin{align*}
A&=\left[\begin{matrix}
3 - 2i &  1- 2i  & 1 &  1 + i\\
1- 2i  & 2 &  -i &  -i\\
1 &  -i  & 1 + 3i  & 3i\\
1+i &  -i &  3i &  1 + 4i
\end{matrix}\right]\text{ and } B=I.
\end{align*}
Then
\begin{align*}
\phi(A\odot B)&=[1.3258\ \ 1.249 \hspace{12pt} 0\hspace{29pt} -0.588],\\
\phi(A)+\phi(B)&=[1.5303\ \ 0.7684\ \ 0.3561\ \ -0.7926].
\end{align*}
It can be seen that no majorization type relation holds between $\phi(A\odot B)$ and $\phi(A)+\phi(B)$.
This example also invalidates $\angle \lambda (A\odot B)\prec_w\phi(A)+\phi(B)$.
\end{eg}

Notwithstanding, we have the following weaker result.
\begin{theorem}
Let $A,B\in\mathbb{C}^{n\times n}$ be sectorial matrices with phases in $(\theta_1, \theta_1+\pi)$ and $(\theta_2,\theta_2+\pi)$, where $\theta_1\in [-\pi,\delta(A))$ and $\theta_2 \in [-\pi,\delta(B))$, respectively. If $\phi_1(A) + \phi_1(B) - \phi_n(A) - \phi_n(B) < \pi$, then $A\odot B$ is sectorial,
 \begin{align*}
 \phi_1(A \odot B) \leq \phi_1(A)+\phi_1(B) \quad \text{and} \quad
 \phi_n(A \odot B) \geq \phi_n(A)+\phi_n(B).
 \end{align*}
\end{theorem}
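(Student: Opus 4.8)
The plan is to identify $A\odot B$ as a compression of the Kronecker product $A\otimes B$, and then combine Theorem~\ref{thm: kronecker} with the interlacing inequality for compressions (Lemma~\ref{thm: compression}).

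First I would use the standard observation that the Hadamard product is a principal submatrix of the Kronecker product. Let $J\in\mathbb{C}^{n^2\times n}$ be the isometry whose $i$th column is the standard basis vector $e_{(i-1)n+i}$, for $i=1,\dots,n$. A direct check of entries shows that $J^*(A\otimes B)J=A\odot B$, so $A\odot B$ is a compression of $A\otimes B$ in the sense of Section~\ref{sec:compression}.

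Next, since $\phi_1(A)+\phi_1(B)-\phi_n(A)-\phi_n(B)<\pi$, Theorem~\ref{thm: kronecker} applies and gives that $A\otimes B$ is sectorial with phases $\{\phi_i(A)+\phi_j(B):1\le i,j\le n\}$. Because the phases of $A$ and of $B$ are each labeled in non-increasing order, the largest of these is $\phi_1(A)+\phi_1(B)$ and the smallest is $\phi_n(A)+\phi_n(B)$; that is,
\[
\phi_1(A\otimes B)=\phi_1(A)+\phi_1(B),\qquad \phi_{n^2}(A\otimes B)=\phi_n(A)+\phi_n(B).
\]
Now apply Lemma~\ref{thm: compression} with $C=A\otimes B\in\mathbb{C}^{n^2\times n^2}$ and $\tilde C=A\odot B\in\mathbb{C}^{n\times n}$, so that $k=n^2-n$. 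This yields at once that $A\odot B$ is sectorial and, taking $j=1$ and $j=n$ in \eqref{eq: compression},
\[
\phi_1(A\odot B)\le\phi_1(A\otimes B)=\phi_1(A)+\phi_1(B),\qquad
\phi_n(A\odot B)\ge\phi_{n^2}(A\otimes B)=\phi_n(A)+\phi_n(B),
\]
which is the claim.

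I do not expect a substantial obstacle here; the only point requiring care is the bookkeeping of the phase interval. One should note that the hypothesis $\phi_1(A)+\phi_1(B)-\phi_n(A)-\phi_n(B)<\pi$ is exactly the condition ensuring that all the numbers $\phi_i(A)+\phi_j(B)$ lie in a common open interval of length $\pi$, so that $A\otimes B$—and hence its compression $A\odot B$—is sectorial with well-defined phases; this is precisely what Theorem~\ref{thm: kronecker} provides, so nothing further needs to be verified. It is also worth remarking that, in contrast to the Kronecker case, this argument gives no control over the intermediate phases of $A\odot B$, consistent with the counterexample preceding the statement.
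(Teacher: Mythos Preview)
Your proposal is correct and is exactly the paper's first-stated argument: the paper writes that the claim ``follows from Theorem~\ref{thm: kronecker} and Lemma~\ref{thm: compression}, since $A\odot B$ can be expressed as a compression of $A\otimes B$,'' and your writeup just fills in the details of that sentence. The paper also records an alternative direct proof via the identity $x^*(A\odot B)x=\mathrm{trace}(D_x^*AD_xB^T)$ together with $\lambda_i(D_x^*AD_xB^T)\in W'(A)W'(B)$, but this is presented as a second route, not the primary one.
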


\begin{proof}
 The claim follows from Theorems \ref{thm: kronecker} and Lemma \ref{thm: compression}, since $A\odot B$ can be expressed as a compression of $A\otimes B$. We offer an alternative proof below.

Let $x\in\mathbb{C}^n, x\neq 0$. By \cite[Lemma 5.1.5]{horntopics},
 \[
 x^* (A \odot B) x = \mathrm{trace}(D^*_x A D_x B^T) = \sum_{i=1}^n \lambda_i(D^*_x A D_x B^T),
 \]
 where $D_x$ is a diagonal matrix with diagonal entries given by $x_i$. Since
 \[
 \lambda_i(D^*_x A D_x B^T) \in W^\prime(D^*_x A D_x)W^\prime(B^T) \subset W^\prime(A)W^\prime(B),
 \]
 we have $x^* (A \odot B) x \!\neq\! 0$ and
 $\angle x^* (A \odot B) x \!\in\! [\phi_n(A) + \phi_n(B), \phi_1(A) + \phi_1(B)]$,
 as required.
\end{proof}

\section{Conclusions}
In this paper, we define the phases of a sectorial matrix and study their properties. We introduce the symmetric polar decomposition and generalized Cholesky factorization of a sectorial matrix and establish their uniqueness. The symmetric polar decomposition seems to have advantages over the usual polar decomposition, at least in defining the matrix phases. In the scalar case, the symmetric polar decomposition takes the form $c=\sqrt{\sigma}e^{i\phi}\sqrt{\sigma}$. A number of useful properties of the matrix phases have been studied, including those of compressions, Schur complements, matrix products and sums. The rank robustness of a matrix against magnitude/phase perturbations, motivated from applications in robust control, has also been examined. A pair of problems: sectorial matrix completion and decomposition extending those of the positive semidefinite completion and decomposition are studied.

The definition of phases can be generalized to matrices whose numerical ranges contain the origin on their boundaries. We call such matrices semi-sectorial. A number of results in this paper have potential extensions to semi-sectorial matrices, including those on sectorial and symmetric polar decompositions, matrix products, sums, and rank robustness.

The definition of phases can be further extended to some matrices which are not sectorial, such as block diagonal matrices with sectorial diagonal blocks, the Kronecker products of sectorial matrices, compound sectorial matrices, etc. These matrices are constructed from sectorial matrices and their phases can be defined by exploiting the phases of the original sectorial matrices.

How to define phases for general nonsectorial unitoid matrices, which are congruent to unitary matrices, remains open. A critical issue in this problem involves determining how the phases take values and deriving their corresponding properties. We expect that the notion of Riemann surface would play an important role in studying the phases of unitoid matrices.

\section*{Acknowledgment}
This work was partially supported by the Research Grants Council of
Hong Kong Special Administrative Region, China, under the Theme-Based
Research Scheme T23-701/14-N and the General Research Fund 16211516.

The authors also wish to thank Di Zhao, Axel Ringh, and Chi-Kwong Li for valuable discussions.

\bibliographystyle{elsarticle-num}
 \bibliography{dynamic_v2}

\end{document}